\DeclareMathOperator*{\bigast}{\mathop{\raisebox{-0.6ex}{\scalebox{2.5}{$\ast$}}}}
\newtheorem{thm}{Theorem}
\numberwithin{thm}{section}
\newtheorem{lem}[thm]{Lemma}
\newtheorem{prp}[thm]{Proposition}
\newtheorem{cor}[thm]{Corollary}
\newtheorem{conj}[thm]{Conjecture}
\theoremstyle{definition}
\newtheorem{defn}[thm]{Definition}
\newtheorem{ex}[thm]{Example}
\theoremstyle{remark}
\newtheorem{rem}[thm]{Remark}
\newtheorem*{notation}{Notation}
\DeclarePairedDelimiter{\abs}{\lvert}{\rvert}
\newcommand{\stephead}[1]{\vspace{.2\baselineskip}\noindent\textit{{#1}.}}
\newcommand{\df}{\textit}
\newcommand{\union}{\cup}
\newcommand{\inter}{\cap}
\newcommand{\mc}{\mathcal}
\newcommand{\mf}{\mathfrak}
\newcommand{\B}{\mathbb{B}}
\renewcommand{\H}{\mathbb{H}}
\renewcommand{\P}{\mathbb{P}}
\newcommand{\R}{\mathbb{R}}
\newcommand{\C}{\mathbb{C}}
\newcommand{\Z}{\mathbb{Z}}
\newcommand{\IS}{\mathbb{S}}
\newcommand{\CB}{\mathcal{CB}}
\newcommand{\F}{\mathcal{F}}
\DeclareMathOperator{\PSL}{\mathsf{PSL}}
\DeclareMathOperator{\SL}{\mathsf{SL}}
\DeclareMathOperator{\Isom}{Isom}
\DeclareMathOperator{\Teich}{Teich}
\DeclareMathOperator{\QH}{QH}
\DeclareMathOperator{\Fix}{Fix}
\DeclareMathOperator{\Hom}{Hom}
\DeclareMathOperator{\tr}{tr}
\DeclareMathOperator{\Hol}{Hol}
\DeclareMathOperator{\Sing}{Sing}
\DeclareMathOperator{\Sign}{sign}
\DeclareMathOperator{\Int}{int}
\DeclareMathOperator{\CAT}{CAT}
\begin{document}
\zcsetup{noabbrev, cap}

\title[Changing topological type of compression bodies]{Changing topological type\\of compression bodies\\through cone manifolds}
\author[A. Elzenaar]{Alex Elzenaar}
\address{School of Mathematics\\ Monash University\\ Melbourne\\ Australia}
\email{alexander.elzenaar@monash.edu}
\thanks{During the period that this work was undertaken, the author was supported by an Australian Government Research Training Program Scholarship.
Parts of the manuscript were written during a visit to Jeroen Schillewart, supported by the NZ Marsden Fund through grant UoA-2122. Part of
this research was supported by the USA National Science Foundation under Grant No. DMS-2424139 while the author was in residence at the Simons Laufer
Mathematical Sciences Institute in Berkeley, California during the Spring 2026 semester.}

\subjclass[2020]{Primary 57K35; Secondary 20F65, 30F40, 30F60, 57K32, 58H15}
\keywords{Kleinian groups, compression bodies, cone manifolds, unknotting tunnels, combination theorems, function groups}

\begin{abstract}
  Homeomorphism types of compression bodies form the vertices of a graph where two vertices are joined by an edge
  if one compression body is obtained by gluing a $2$-handle onto the other. Motivated by earlier work of Lackenby and Purcell
  on geodesicity of unknotting tunnels for hyperbolic links, we show that it is possible to realise all of the edges in the graph
  of compression bodies by paths of cone manifold holonomy groups such that the handle that is glued in is obtained as a limit of singular arcs of cone angle increasing from $0$ to $ 2\pi $.
  We apply standard techniques from the theory of $ \CAT(0) $ spaces, and do not rely on the harmonic deformation theory of Hodgson and Kerckhoff. Along the way we prove
  a generalisation of a classic theorem of Koebe and Maskit on existence of function groups which implies
  existence results for reflex angled hyperbolic cone structures on a wide range of compression bodies.
\end{abstract}

\maketitle

\section{Introduction}
It was first recognised by Thurston~\cite{thurston82} that the generic $3$-manifold, which cannot be naturally decomposed by cutting along embedded $2$-spheres
and tori, admits a hyperbolic structure: for instance, almost all knot and link complements are hyperbolisable. Thurston also proposed a rigidity theorem, subsuming earlier
results of Mostow and Prasad~\cite{mostow,prasad73} and Marden~\cite{marden74g}. This result, now known as the \df{ending lamination theorem}, was eventually proved by Brock,
Canary, and Minsky using very deep results of Agol, Calegari, Gabai, and others. It reduces the classification of hyperbolic $3$-manifolds to a classification of their
homeomorphism types and the conformal structures on their ends---including both conformal structures on boundary components, and asymptotic conformal structures on ends
not associated to boundary components. Detailed statements and references to the papers comprising the proof may be found in~\cite[\S 5.7]{marden}.

The theory of hyperbolic $3$-manifolds is intimately related to the $\PSL(2,\C)$ representation theory of their holonomy groups. Every complete hyperbolic $3$-manifold
is of the form $ \H^3/\Gamma $ where $ \Gamma \leq \PSL(2,\C) $ is a torsion-free discrete group acting by isometries on hyperbolic space. We use the term \df{Kleinian group}
to mean a discrete subgroup of $ \PSL(2,\C) $ (possibly with elements of finite order, in which case the quotient is an orbifold not a manifold). The representation
entirely determines the manifold, and thus the ending laminations. Conversely a choice of ending laminations and homeomorphism type determines a representation up to conjugacy. The
exact relationship between the two points of view is subtle and we can pass explicitly from one world to the other only in very simple cases.

Let $S$ be a compact surface of finite type (so $S$ is topologically determined by its genus $g$). The set of all representations $ \rho : \pi_1(S) \to \PSL(2,\C) $, modulo inner
automorphism in $ \PSL(2,\C) $, is called the \df{character variety} $ X(\pi_1(S)) $~\cite{przytycki00}. Most of these representations are not discrete. The representations which \emph{are} discrete
and non-elementary (and so represent hyperbolic $3$-orbifolds) come in two kinds, depending on whether or not they admit quasiconformal deformations.
The non-rigid groups are those groups $ \Gamma $ such that $ \H^3/\Gamma $ has some end with a nontrivial Teichm\"uller space and they lie in an open subset
of $ X(\pi_1(S)) $ made up of discrete representations. The rigid groups, on the other hand, are either finite covolume groups (with no conformal ends), or are infinite covolume
groups with only thrice-marked spheres at conformal ends. The finite covolume groups are isolated in representation space by Mostow--Prasad rigidity,
and the infinite covolume groups either lie on the boundary of quasiconformal deformation spaces, or are orbifold groups with only thrice-marked spheres at conformal ends and
arbitrarily complicated nonconformal ends and can be joined to the boundary of a quasiconformal deformation space by a cone deformation as outlined in~\cite[\S 7.2]{chk}.

By the ending lamination theorem and associated results, we can enumerate all of the possible topological types of non-rigid discrete representations
that can occur in $ X(\pi_1(S)) $. However, we cannot easily convert this geometric and topological information to algebraic data and extract concrete information about the positions and relative
arrangement of the islands of discreteness inside the character variety. In addition, the indiscrete groups in $ X(\pi_1(S)) $ lying between these islands can be very badly behaved. In various
settings~\cite{gallo00,mathews11,mathews12} it is known that almost all of these groups are holonomy groups of cone manifolds---a cone manifold is a generalisation of an orbifold
where the cone angles are allowed to take any value, not just submultiples of $ 2\pi $, see e.g.~\cite[Chapter 3]{chk}. However, the coarse
structure of these cone manifolds is chaotically dependent on the trace parameters: small trace deformations give rise to wild changes in the singular structure of the cone manifolds,
completely unlike the behaviour of deformations of discrete groups where the large-scale structure of the quotient space is always preserved.

The aim of this paper is to construct paths of representations with controlled global geometry that lead from one deformation space to another; this is related to, but does
not depend on, the extensive theory of cone deformations studied by Hodgson and Kerckhoff~\cite{hodgson98} and later developed by others including Wei\ss~\cite{weiss05,weiss07,weiss13},
Kojima~\cite{kojima98}, Mazzeo and Montcouquiol~\cite{mazzeo11}, and Montcouquiol~\cite{montcouquiol13}. To summarise the results of those works, it is known that cone angles can be deformed
locally within the interval $ (0,2\pi) $ (that is, if a cone manifold has a singular arc with cone angle $ \theta \in (0,2\pi) $ then there exists some $ \epsilon > 0 $ such that the hyperbolic
metric can be varied continuously to deform this cone angle within the interval $ (\theta - \epsilon, \theta + \epsilon) $---this is the \df{local rigidity theorem}) and globally within in the
interval $ (0,\pi) $ (this is the \df{global rigidity theorem}). There are significant obstructions to any general result allowing continuous deformations of cone angles throughout the full
interval $ (0,2\pi) $, essentially because at angle $ \pi $ the hyperbolic structure can limit onto a cone manifold modelled on a non-hyperbolic Thurston geometry; see the references in Cooper,
Danciger, and Wienhard~\cite{cooper18}. In order to obtain general results, one must have additional geometric information, such as bounds on the radius of a cylindrical neighbourhood around
the cone arc to be deformed.  Such estimates have been studied before~\cite{futer22,futer22b,hodgson05}, but our methods to perform global cone deformations in the current paper do not involve the study of the
hyperbolic metric. Rather, they derive from the classical theory of combination theorems in geometric and combinatorial geometry.

Combination theorems of one kind or another have been of great interest ever since the original Klein combination theorem for free products~\cite{klein82}.
Combination theorems for Fuchsian groups were first given by Fenchel and Nielsen~\cite[\S IV.18]{fenchel}, for Kleinian groups by Maskit~\cite{maskit,maskit93}, and for $\CAT(0)$-spaces by
Bridson and Haefliger~\cite[Chapter~II.11]{bridson_haefliger}. In recent years, combination theorems have been given for higher-rank analogues of Kleinian groups~\cite{dey25,danciger24}. Our
combination theorems may be found in \zcref{sec:combination_theorems}.

\subsection{The main result}
A \df{compression body} of genus $n$ is a $3$-manifold obtained from a thickened genus $n$ surface $ \Sigma_n \times I $
by gluing in $2$-handles along curves on one end and then gluing in $3$-handles to remove any essential spheres that are produced~\cite[\S3.3]{bonahon02}. The end along which no discs are glued
is called the \df{compression end}. If the compression end is identified with $ \Sigma_n $ then boundary inclusion gives a canonical surjective map $ \pi_1(\Sigma_n) \to \pi_1(M) $.
This, if $M$ has a complete hyperbolic structure then the holonomy representation $ \Hol(M) $ lies in the $\PSL(2,\C)$-character variety of $ \pi_1(\Sigma_n) $.
Our main result is the following.
\begin{thm}\label{thm:main}
  Let $M$ and $M'$ be a pair of (topological) compression bodies such that $ M $ is obtained from $M'$
  by gluing a $2$-handle onto the non-compression end. Then there exists a smooth path $\gamma$ parameterised by $ [0,2\pi] $ in the character variety of $ M' $ such that:
  \begin{itemize}
    \item $ \gamma(0) $ is a complete hyperbolic structure on $ M' $;
    \item $ \gamma(2\pi) $ is a complete hyperbolic structure on $ M $; and
    \item For $ \theta \in (0,2\pi) $, $ \gamma(\theta) $ is the holonomy group of a hyperbolic cone manifold, topologically
          supported on $ M $, with a cone arc of angle $ \theta $ which when drilled produces a manifold homotopic to $ M' $.
  \end{itemize}
  This path continuously realises the gluing of a $2$-handle by a cone deformation.
\end{thm}

An alternative way of phrasing this result is that the graph of compression bodies joined by cone deformations is connected.
\begin{cor}\label{thm:connectedness}
  Let $ \mc{G}_{\text{CB}} $ be the graph with vertex set `the set of homeomorphism types of hyperbolic compression bodies', so
  that two vertices are joined by an edge if there exists a smooth path of cone manifolds modelling a deformation from a compression body
  of one type to the other. Then the graph $ \mc{G}_{\text{CB}} $ is connected.
\end{cor}
\begin{proof}
  Consider the graph with vertices corresponding to homeomorphism types of compression bodies, with two vertices joined by an
  edge if the two compression bodies are related by gluing in a $2$-handle~\cite{maher21}. This graph is connected, essentially by definition of compression bodies.
  \zcref[S]{thm:main} implies that this graph is isomorphic to the graph $\mathcal{G}_{\text{CB}}$, since it implies that every $2$-handle gluing
  is realised by a cone deformation.
\end{proof}

In \zcref{sec:triangle_gps} we study the basic building blocks (cone triangle groups) used to build the intermediate cone manifold groups of \zcref{thm:main},
and we prove the theorem itself in \zcref{sec:maskit_decomposition}. One interesting thing about the proof is that, since it is performed by physically constructing
fundamental domains via describing necessary combinatorics of circles in the complex plane, small perturbations of the path still give cone manifolds of known topological
type. This shows that, around the path we have constructed in the character variety of $ M' $, there is a small tube with a product structure $ [0,2\pi] \times \B^r $
where each level-disc is identified canonically with a small ball (of possibly high co-dimension) in the Teichm\"uller space of the compression end of $M'$. It follows from our proof that this tube
has bounded-below diameter (since the limit groups at $ \theta = 0 $ and $ \theta \to 2\pi $ are also flexible), and so actually \zcref{thm:main} implies that every
geometrically infinite hyperbolic metric in a small neighbourhood of the constructed geometrically finite metric of $M$ also has the property that the core tunnel is geodesic.
Unfortunately it seems very difficult to effectively bound the level-sets of this tube in the Teichm\"uller metric. We do note that if we could extend this tube to
a foliation $ [0,2\pi] \times \Teich(S_{g,0}) $ then this would give a product structure on a subset of $ X(\Hol(M'),\PSL(2,\C))$ with $2\pi$-level set equal to $ \QH(\Hol(M'))  $,
which seems incompatible with the known bumping results for $ \partial \QH(\Hol(M')) $~\cite{ohshika20,magid12,bromberg11}.

Following the proof of the main result, we remark in \zcref{sec:koebemaskit} that using the same techniques one can prove a strengthening of the Koebe--Maskit
existence theorem for function groups, and in particular that one can construct cone manifold structures on compression bodies with the property that many angles
are reflex. The original theorem was an important piece of the classical project of Ahlfors--Bers--Maskit to classify of Kleinian groups via their
quasiconformal deformations, and this new result forms part of a project of the author to recover results about cone manifolds from classical Kleinian group theory.

\subsection{Relation to unknotting tunnels}\label{sec:intro_lp}
The final section of the paper is devoted to the study of an explicit cone manifold deformation which is motivated by the theory of unknotting
tunnels, so we spend a few paragraphs now to explain this.

An \df{unknotting tunnel system} of size $n$ for $M$ is a collection $ \alpha_1, \ldots, \alpha_n $
of embedded ideal arcs in $ M $ such that $ M \setminus (\union_{j=1}^n \alpha_j) $ is a handlebody, and the \df{tunnel number} of $M$ is the minimal size of an unknotting tunnel
system for $M$. Once it is known that a given manifold $ M $ has tunnel number $n$, then one can try to count the number of possible unknotting tunnel systems with that size,
or describe how those systems interact with a metric on $ M $---in particular, whether they are isotopic to geodesics in that metric.
It is a variation on this last question, posed in general by Lackenby and Purcell~\cite{lackenby14} and Burton and Purcell~\cite{burton14}, which was our original motivation.

One context where geodesicity of unknotting tunnels has been well-studied is the setting of link complements with tunnel number $1$. Given a link $ \mf{k} \subset \IS^3 $, an unknotting tunnel
for $ \mf{k} $ is an embedded open arc $ \alpha \subset \IS^3 \setminus \mf{k} $ with the property that $ \IS^3 \setminus (\mf{k} \union \alpha) $ is a genus two
handlebody. It was conjectured by Adams~\cite{adams95} that if $ \IS^3 \setminus \mf{k} $ is hyperbolic, then every unknotting tunnel is isotopic
to a geodesic in the hyperbolic metric. In the same paper the conjecture was proved for the special case that $ \mf{k} $ has two components, and subsequently
the conjecture was confirmed for the special case of the upper and lower tunnels of $2$-bridge knots by Adams and Reid~\cite{adams96}. These results
rely on obtaining hands-on estimates on the injectivity radius around the tunnel in the hyperbolic metric on $ \IS^3 \setminus \mf{k} $.

A different and very flexible method to prove conjectures of the form `a fixed arc $ \alpha $ is isotopic to a geodesic' is to show that there exists a continuous path of cone manifolds with singular
locus of angle $ \theta $ along $\alpha$, where $ \theta $ ranges over the entire closed interval $ [0,2\pi]$. In a previous work~\cite{elzenaar25ej}, the author used this
technique to give an alternative proof that the upper unknotting tunnel of a highly twisted $2$-bridge link is isotopic to a geodesic in the hyperbolic metric.
We will utilise it here in relation to the following conjecture of Lackenby and Purcell~\cite{lackenby14}:
\begin{conj}\label{conj:lackenby_purcell}
  Let $ M $ be a hyperbolic $3$-manifold that is homeomorphic to a compression body with genus $2$ compression end
  and a single other end of genus $1$; i.e.\ it is obtained by taking a thickened torus and adjoining a $1$-handle.
  Then the core arc of the $1$-handle, extended to meet the genus $1$ end twice as in \zcref{fig:lp_group}, is isotopic to a geodesic in $M$.
\end{conj}

\begin{figure}
  \centering
  \begin{subfigure}{.5\textwidth}
    \labellist
    \small\hair 2pt
    \pinlabel {torus end} [l] at 149 76
    \pinlabel {core tunnel} [r] at 230 55
    \endlabellist
    \centering
    \includegraphics[width=\textwidth]{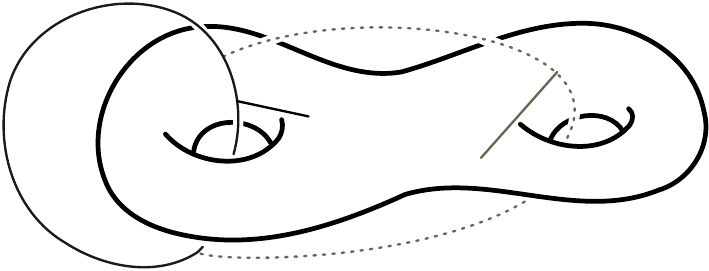}
    \caption{$(1;2)$-compression body.\label{fig:lp_group}}
  \end{subfigure}\hfill
  \begin{subfigure}{.4\textwidth}
    \labellist
    \small\hair 2pt
    \pinlabel $\gamma$ [b] at 132 158
    \pinlabel $\beta$ [b] at 180 154
    \pinlabel $\alpha$ [b] at 213 158
    \endlabellist
    \centering
    \includegraphics[width=\textwidth]{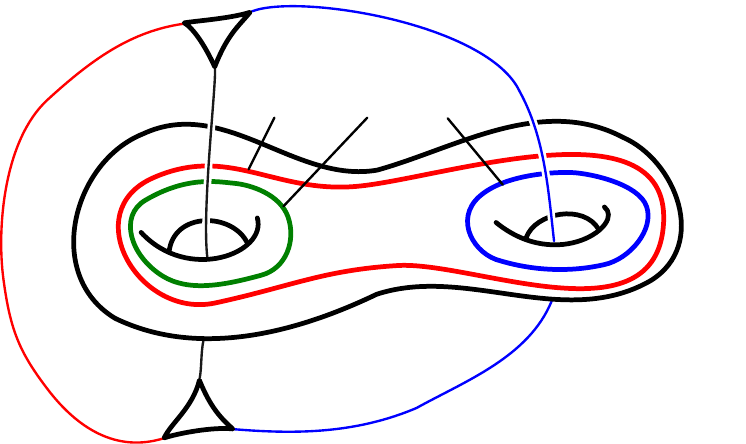}
    \caption{$\mc{M}(2)$-manifold.\label{fig:m2_group}}
  \end{subfigure}
  \caption{The two hyperbolic manifolds of interest, viewed from the interior. Solid arcs and surfaces are drilled out, and the dotted arc in (\textsc{a}) is embedded in the manifold.}
\end{figure}

A compression body such as the one in \zcref{conj:lackenby_purcell}, i.e.\ with one genus $2$ end and one genus $1$ end, will be called a \df{$(1;2)$-compression body}~\cite{lackenby14}. The core arc acts as an
analogue of an unknotting tunnel---it is an arc which, when drilled, makes the manifold into a thickened surface rather than a handlebody. This drilling then supports a certain kind of hyperbolic
metric:
\begin{defn}\label{defn:m2mfd}
  A $ \mc{M}(2)$-manifold is a geometrically finite hyperbolic compression body, homeomorphic to the thickened genus $2$ surface, so that one end is a compact surface and the
  other end is a pair of thrice-punctured spheres arranged as in \zcref{fig:m2_group}.
\end{defn}
\begin{rem}
  The deformation space of $ \mc{M}(2)$-manifold holonomy groups is a \df{Maskit slice} of genus $2$ surfaces, c.f.\ Marden~\cite[\S 5.10]{marden}.
\end{rem}

An immediate consequence of \zcref{thm:main} is that there exists a path (in fact, a large family of paths) connecting $ (1;2)$-compression body groups to $ \mc{M}(2)$-manifold groups so that the isomorphism
and quasi-isometry types of the intermediate groups are controlled.
\begin{thm}\label{thm:weakthm}
  Let $G$ be the holonomy group of a geometrically finite hyperbolic $ (1;2)$-compression body. There exists a path of cone manifold holonomy
  groups $ p : [0,2\pi] \to \Hom(G, \PSL(2,\C)) $ such that $ p(2\pi) = G $, $ p(0) $ is an $ \mc{M}(2)$-group, and
  for every $ \theta \in (0,2\pi) $ the group $ p(\theta) $ is the holonomy group of a geometrically finite cone manifold structure on a $(1;2)$-compression body with
  a single singular arc of angle $ \theta $ along the arc with meridian labelled $ \beta $ in \zcref{fig:m2_group}.
\end{thm}
In \zcref{sec:explicit_deformation} we physically construct the fundamental domains for every cone manifold along the path, and give matrix realisations for their holonomy groups.
Though it is doubtless unsurprising to experts that such paths exist, to our knowledge this is the first explicit example of cone
deformations between deformation spaces of infinite covolume Kleinian groups. Some related work is that of Akiyoshi~\cite{akiyoshi18}, who has given families of cone deformations from spaces
of punctured torus groups to the space of rank two parabolic elementary groups. In addition, Yoshida~\cite{yoshida22} has given explicit cone deformations between some finite covolume
groups; and the author~\cite{elzenaar25ej} has given cone deformations joining highly twisted $2$-bridge link groups to genus two Schottky space.

The motivation for \zcref{thm:weakthm} is that a strengthening of it would prove \zcref{conj:lackenby_purcell} for geometrically finite groups:
\begin{conj}[{\zcref[S]{thm:weakthm} with conformal rigidity}]\label{conj:conformal_rigidity}
  Let $G$ be the holonomy group of a geometrically finite hyperbolic $ (1;2)$-compression body. There exists a path of cone manifold holonomy
  groups $ p : [0,2\pi] \to \Hom(G, \PSL(2,\C)) $ satisfying the three conditions of \zcref{thm:weakthm} together with
  \begin{enumerate}[start=4]
    \item The complex structure on the genus $2$ end is held constant: that is, the map $ [0,2\pi] \to \Teich(S_{2,0}) $ induced by looking at the genus $2$ surface is a constant map.
  \end{enumerate}
\end{conj}
\begin{lem}
  \zcref[S]{conj:conformal_rigidity} implies \zcref{conj:lackenby_purcell} for geometrically finite manifolds.
\end{lem}
\begin{proof}
  Fix a geometrically finite hyperbolic structure on a $(1;2)$-compression body. By the Marden isomorphism theorem~\cite{marden74g},
  the structure is uniquely determined by the complex structure on the genus $2$ end. By \zcref{conj:conformal_rigidity}, there exists a cone deformation from an $ \mc{M}(2) $ group
  (namely, the unique such group with the same complex structure on the genus $2$ end) so that at every point the core tunnel is represented by the axis of an elliptic
  element and is therefore isotopic to a geodesic. By continuity of the metric, the limit of the core tunnels is geodesic. It is also embedded, since by the structure of cone manifolds
  for each cone angle $ \theta \in (0,2\pi) $ we can find an embedded tube in the manifold around the elliptic axis which lifts to a piece of the fundamental domain such that the elliptic
  pairs up two faces that meet at a single edge with dihedral angle $ \theta $; as $ \theta \to 2\pi $ this edge vanishes, the two adjacent faces become identified in the interior of the manifold,
  and the axis of the elliptic limits onto a geodesic curve embedded in the interior of the polyhedron so it descends to an embedded geodesic.
\end{proof}

\begin{rem}
  For a geometrically infinite structure, by the Density Theorem proved by Namazi and Souto~\cite{namazi12} and Ohshika~\cite{ohshika11} there is a sequence of
  geometrically finite $ (1;2)$-compression body metrics converging to it, but the limit geodesic might be self-intersecting since there is no uniform control over
  the fundamental polyhedra of the groups in the limiting sequence. As such, it may no longer be isotopic to the core tunnel.
\end{rem}

Unfortunately the methods we use in this paper cannot prove \zcref{conj:conformal_rigidity}.

\subsection{Acknowledgments}
I thank Jeroen Schillewaert and Ari Markowitz for helpful discussions on geometric group theory.
I thank Jessica Purcell and other readers for helpful comments on this manuscript.

\section{Triangle groups and cone manifolds}\label{sec:triangle_gps}
Our proof of \zcref{thm:connectedness} will go via decomposing the groups of interest up into the simplest possible pieces, deforming those pieces, and then gluing
them back up. In this section we will describe these simple pieces, which will be triangle groups.

When $ p,q,r \in \Z_{\geq 0}$ satisfy $ 1/p + 1/q + 1/r < 1 $, there is a geodesic triangle in $ \H^2 $ with dihedral angles $ \pi/p $, $ \pi/q $, and $ \pi/r $.
Classically, a $(p,q,r)$-\df{triangle group} is a Coxeter group generated by the reflections in the sides of this triangle. We will be interested only
in the index $2$ subgroup consisting of orientation-preserving elements, which has presentation
\begin{displaymath}
  \langle x, y : x^p = y^q = (xy)^r \rangle
\end{displaymath}
and has a fundamental domain consisting of a single quadrilateral~\cite[\S 10.6]{beardon}. These groups are
discrete, orientation-preserving subgroups of $ \Isom^+(\H^2) $. By a series of classical results in conformal analysis, such groups can be realised as discrete subgroups
of $ \PSL(2,\C) $ which preserve the two components of $ \hat{\C} \setminus \IS^1 $, where $ \hat{\C} $ is the Riemann sphere $ \C \union \{\infty\} $ and $ \IS^1 = \{ z \in \C : \abs{z} = 1\} $
is the unit circle. We will consider similarly defined groups which are defined in terms of reflections in the sides of a triangle with arbitrary dihedral angles. These are not discrete in general.

\begin{defn}
  For $ \alpha,\beta,\gamma \in \R_{\geq 0} $ satisfying $ \alpha+\beta+\gamma<\pi$, the $ [\alpha,\beta,\gamma]$-\df{triangle group} is the orientation-preserving
  half of the group generated by reflections in the hyperbolic triangle with angles $ \alpha,\beta,\gamma $.
\end{defn}
\begin{rem}
  We use square brackets to avoid a notational clash with the classical (discrete) triangle groups. The orientation-preserving half of the $ (p,q,r)$-triangle group is the $ [\pi/p,\pi/q,\pi/r]$-triangle group.
\end{rem}

Generalising the association between discrete triangle groups and certain hyperbolic orbifolds, we can find geometric structures for which the indiscrete triangle groups
act as holonomy groups. A \df{hyperbolic cone manifold} of dimension $n$ is an orientable topological manifold $M$ that is triangulated so that each simplex is homeomorphic to a geodesic simplex in
$\H^n $; pulling back the metric from $\H^n$ to $ M$ gives the latter a complete metric structure; there is a codimension $\geq 2$ subset $ \Sing(M) \subset M $ such that
the manifold $ M \setminus \Sing(M) $ is an incomplete Riemann manifold of sectional curvature $ -1 $. See Cooper, Hodgson, and Kerckhoff~\cite{chk} for further details.
Given a hyperbolic cone manifold $M$, one can define its holonomy group just as for a complete hyperbolic manifold. We shall only be interested in dimensions $ 2 $ and $3$, in which
case the holonomy group $ \Hol(M) $ is a subgroup of $ \PSL(2,\C) $ with the property that if $ \alpha $ is a singular arc (in dimension $3$) or a singular point (in dimension $2$)
of cone angle $ \theta $, then the meridian loop of $ \alpha $ is represented in $ \Hol(M) $ by a hyperbolic rotation (i.e.\ an elliptic element) of angle $\theta$.

\begin{lem}\label{lem:triangle_is_holonomy}
  The holonomy group of each of the following cone manifolds is an $ [\alpha,\beta,\gamma]$-triangle group:
  \begin{enumerate}
    \item A cone manifold of dimension $2$ supported on $ \IS^2 $ with singular locus consisting of\/ $3$ points, one each of
          cone angle $ 2\alpha $, $ 2\beta $, and $ 2\gamma $, such that $ \alpha + \beta + \gamma < \pi $;
    \item A cone manifold of dimension $3$ which is the product of the manifold in (1) with an open interval.
  \end{enumerate}
\end{lem}

\begin{figure}
  \labellist
  \small\hair 2pt
  \pinlabel {$2\gamma$} [l] at 38 83
  \pinlabel {$2\alpha$} [tr] at 130 154
  \pinlabel {$2\beta$} [br] at 125 33
  \pinlabel {$C_1$} [r] at 200 113
  \pinlabel {$C_2$} [l] at 442 81
  \pinlabel {$A$} [b] at 311 191
  \pinlabel {$B$} [t] at 313 10
  \pinlabel {$\theta$} [l] at 207 115
  \pinlabel {$2\gamma-\theta$} [r] at 435 81
  \pinlabel {$2\alpha$} [t] at 311 185
  \pinlabel {$2\beta$} [b] at 313 16
  \endlabellist
  \centering
  \includegraphics[height=.15\textheight]{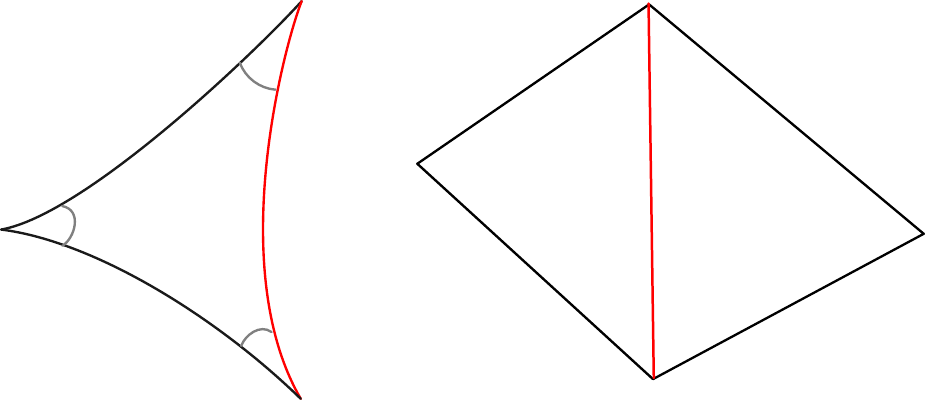}
  \caption{The holonomy group of a $3$-marked sphere has a quadrilateral fundamental domain obtained by cutting along two (black) geodesics joining marked points, leaving one (red) uncut geodesic
  that is an axis of reflective symmetry. In this diagram straight lines represent hyperbolic (not Euclidean) geodesic segments.\label{fig:triangle_quad}}
\end{figure}
\begin{proof}
  Let $M$ be the manifold defined in (1). To see that the holonomy group of $M$ is as claimed, consider the two geodesic triangles with vertices at the three cone points. Cut along two of the edges,
  to produce two hyperbolic triangles joined along one common edge. Without loss of generality, we may assume that this edge joins the cone points of angle $ 2\alpha $ and $ 2\beta $,
  so the quadrilateral has angles $ 2\alpha $, $ 2\beta $, $ \theta $, and $ 2\gamma -\theta $ for some $ \theta \in [0,2\gamma] $, at respective corners $ A, B, C_1, C_2 $; this notation is shown
  in \zcref{fig:triangle_quad}. Because the triangle was obtained by cutting, $ d(A, C_1) = d(A,C_2) $ and $ d(B,C_1) = d(B,C_2) $ where $d$ is the hyperbolic metric. Elementary hyperbolic geometry
  tells us now that the quadrilateral must be symmetric upon reflection across $ [A,B] $ and thus $ \theta = \gamma $. We see that a fundamental domain for $ \Hol(M) $ consists of a quadrilateral
  with angles $ 2\alpha $, $2\beta $, $ \gamma $, $ \gamma $; additionally the side-pairing transformations must be rotations of angles $ 2\alpha $ and $ 2\beta $; thus $ \Hol(M) $ is a triangle group
  of the claimed type.

  That the holonomy group of (2) is as claimed follows from the usual properties of product spaces. One can realise it explicitly by embedding $ \H^2 $ as a round ball into the Riemann sphere $ \hat{\C} $;
  reflecting the fundamental quadrilateral for the manifold in (1) across $ \partial \H^2 $, and taking the polyhedron in $ \H^3 $ cut out by the geodesic domes supported on the sides of the two polygons
  on $ \hat{\C} = \partial \H^3 $. The result follows from the Poincar\'e polyhedron theorem for cone manifolds~\cite[Appendix]{elzenaar25ej}.
\end{proof}

Some further examples of fundamental polygons for $ [0,0,\theta]$-triangle groups are given in \zcref{fig:triangles}; as $ \theta \to \pi $, the group degenerates
to an elementary group generated by a single translation fixing $ \infty $ and the interior of the fundamental domain limits onto $ \infty $. Geometrically, this corresponds
to the manifolds of type (2) in \zcref{lem:triangle_is_holonomy} degenerating to a solid torus as the cone angle around one of the singular arcs degenerates to $2\pi$.

\begin{figure}
  \begin{subfigure}{0.2\textwidth}
    \centering
    \includegraphics[width=\textwidth]{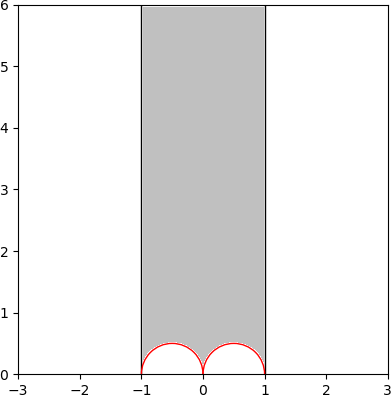}
    \caption*{$\theta=0$}
  \end{subfigure}
  \begin{subfigure}{0.2\textwidth}
    \centering
    \includegraphics[width=\textwidth]{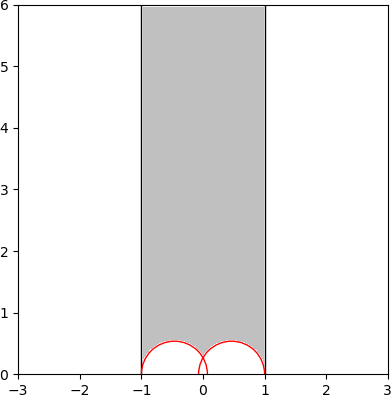}
    \caption*{$\theta=\pi/6$}
  \end{subfigure}
  \begin{subfigure}{0.2\textwidth}
    \centering
    \includegraphics[width=\textwidth]{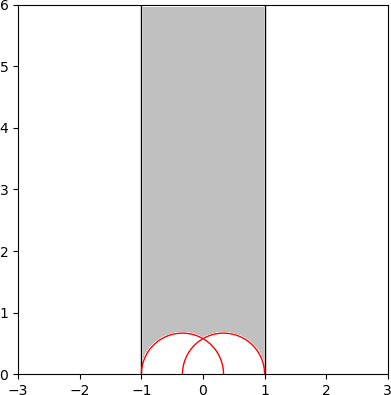}
    \caption*{$\theta=\pi/3$}
  \end{subfigure}
  \begin{subfigure}{0.2\textwidth}
    \centering
    \includegraphics[width=\textwidth]{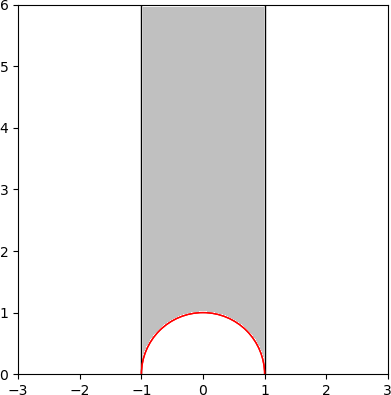}
    \caption*{$\theta=\pi/2$}
  \end{subfigure}\\
  \begin{subfigure}{0.2\textwidth}
    \centering
    \includegraphics[width=\textwidth]{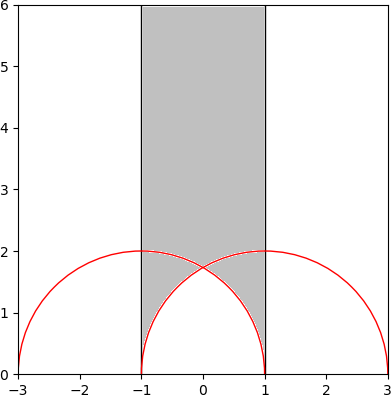}
    \caption*{$\theta=2\pi/3$}
  \end{subfigure}
  \begin{subfigure}{0.2\textwidth}
    \centering
    \includegraphics[width=\textwidth]{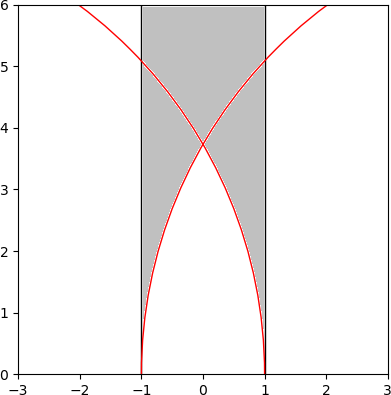}
    \caption*{$\theta=5\pi/6$}
  \end{subfigure}
  \begin{subfigure}{0.2\textwidth}
    \centering
    \includegraphics[width=\textwidth]{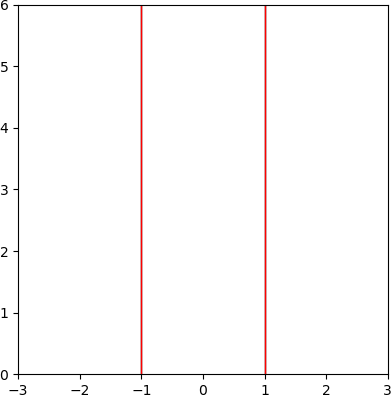}
    \caption*{$\theta=\pi$}
  \end{subfigure}
  \caption{Fundamental quadrilaterals for the $ [0,0,\theta]$-triangle group: quadrilaterals in $ \H^2 $ with with three angles $0$ and fourth angle $ 2\theta $.
  One side-pairing map is an elliptic with rotation angle $ 2\theta $ that fixes the vertex on the imaginary axis and pairs the two sides adjacent to it; the other side-pairing
  map is the horizontal translation that fixes $ \infty $.\label{fig:triangles}}
\end{figure}

\begin{defn}\label{defn:canonical_quad}
  If $ G = \langle x, y \rangle $ is a marked triangle group where $ x $ and $ y $ are primitive rotations, then the \df{canonical quadrilateral} for $ G $
  is the hyperbolic quadrilateral with vertices
  \begin{displaymath}
    \Fix x,\; \Fix y,\; \Fix xy^{-1},\quad\text{and}\quad \Fix yx^{-1}.
  \end{displaymath}
\end{defn}

Observe that the condition $ \alpha + \beta + \gamma < \pi $ is necessary in order to ensure that the fundamental domain for $ \Hol(M) $ embeds into $ \H^2 $ itself. However, $ \H^2 $ is not
the space onto which the cone surfaces of (1) develop, except in the special case that $ \alpha,\beta,\gamma$ is a submultiple of $ \pi$, i.e.\ when $M$ is an orbifold and $ \Hol(M) $ is a classical
triangle group. It is not too hard to describe the developing surface.

\begin{lem}\label{lem:developing_surface}
  As in (1) of \zcref{lem:triangle_is_holonomy}, let $M$ be the cone manifold of dimension $2$ supported on $ \IS^2 $ with singular locus consisting of\/ $3$ points, one each of cone angle $ 2\alpha $, $ 2\beta $,
  and $ 2\gamma $, such that $ \alpha + \beta + \gamma < \pi $. Then the developing surface of $ M $ is a metric space homeomorphic to the union of the unit disc $ \B^2 \subset \C $ with
  countably many of its boundary points.
\end{lem}
\begin{proof}
  The developing surface $\hat{M} $ of $M$ is obtained as the abstract union of the hyperbolic triangles of angles $ \alpha,\beta,\gamma$ along matching edges (i.e.\ the edge joining the $ \alpha $ and $\beta $
  vertices of one triangle is attached to the edge joining the same marked vertices of an adjacent triangle). Let $ \Sing^O(M) $ be the set of singular points of $M$ with cone angle that is a submultiple
  of $ \pi $, and let $ \Sing^C(M) $ be the remaining singular points. Consider the cone orbifold $ S $ that is the quotient of $ \H^2 $ by the classical triangle group with angle-set consisting
  of the same angles as the points in $ \Sing^O(M) $, and the remaining angles being $0$. Roughly speaking, $S$ is the unique complete hyperbolic orbifold obtained by drilling
  the non-orbifold singular points of $M$. There is a unique biconformal map from $ S \setminus \Sing S $ to $ M \setminus \Sing(M) $, and this map lifts to a biconformal map
  from the development $ \hat M \setminus \widehat{\Sing^C(M)} $ to $ \H^2 $ which preserves angles around the orbifold points. Thus $ \hat M $ is the union of a conformal copy of $ \H^2 $
  with the lifts of the non-orbifold singular points $\widehat{\Sing^C(M)}$, and the latter set is naturally identified with the points on $ \partial \H^2 $ corresponding to the limit points of
  the parabolic elements of $ \Hol(S) $ coming from the deleted points of $ \Sing^C(M) $.
\end{proof}

The developing surface $ \hat{M} $ is thus embeddable in the Riemann sphere $ \hat{\C} $ just like $ \H^2 $ is, but the geodesics in $ \hat{M} $ will be distorted due to the cone points on the
boundary. That is, the biconformal map $ \H^2 \to \Int \hat{M} $ induced by the Riemann mapping theorem is not an isometry. In the sequel we will also make use of the isometric development of hyperbolic
cone $3$-manifolds, which is similar: one obtains a singular metric structure on $ \B^3 $.

\section{Compression bodies and function groups}\label{sec:maskit_decomposition}
In the classical theory of Kleinian groups, a \df{function group} is a pair $ (\Gamma,\Delta) $ where $\Gamma$ is a Kleinian group (a discrete subgroup of $\PSL(2,\C)$)
and $ \Delta $ is a connected component of the domain of discontinuity $\Omega(\Gamma) $ that is left invariant by $\Gamma$. In particular, $ \Delta/\Gamma $ is a Riemann
surface with fundamental group surjecting onto $\Gamma$. These groups first arose in early attempts to classify Kleinian groups~\cite{maskit74}, and the topological classification of function
groups was achieved by Maskit~\cite{maskit75,maskit77} (see also the exposition in his monograph~\cite[Chapter~X]{maskit}) who proved that every function group is the holonomy group
of a hyperbolic orbifold structure on a compression body with a compact compression end; in other words, every hyperbolic compression body manifold lies on the boundary of a quasiconformal deformation
space of torsion free function groups. Maskit's work followed earlier work by Koebe~\cite{koebe12}, who gave a procedure to construct a function group of every topological type.


The classification given by Maskit, which we will refer to as the \df{Maskit decomposition}, assigns a combinatorial object (called a \df{signature}) to every function group. This gives a recipe for
reproducing the function group up to quasiconformal conjugacy in $ \PSL(2,\C) $ (equivalently, up to the quasi-isometry type of the hyperbolic quotient manifold). The result is a sequence of amalgamated products
and HNN extensions starting from elementary groups, quasi-Fuchsian groups (here, this means specifically quasiconformal deformations of cocompact Fuchsian groups), and degenerate groups
(Kleinian groups with domain of discontinuity that is non-empty, connected, and simply-connected). Function groups are geometrically finite if and only if no degenerate groups appear in
their Maskit decomposition.

In \zcref{sec:combination_theorems} we introduce the abstract tools which we will use (combination theorems). In \zcref{sec:proofmain} we prove \zcref{thm:main}, by
manufacturing and deforming Maskit decompositions for certain hyperbolic cone manifold structures on the manifolds in the theorem statement. Finally in \zcref{sec:koebemaskit}
we include a brief discussion of the Maskit decomposition for more general hyperbolic cone manifold structures on compression bodies; this is not part of the logical development
of the paper but may be of interest to workers in the area.

\subsection{Combination theorems}\label{sec:combination_theorems}
In order to prove \zcref{thm:main}, we will need cone manifold analogues of the ingredients for the Maskit decomposition. We need only consider the
case of compression bodies with holonomy groups built from triangle groups and elementary rank $2$ parabolic groups. Thus every component group $G$ preserves two discs,
and all amalgamations and HNN extensions will occur by identifying or conjugating together cyclic subgroups generated by a single parabolic element.

We assume that the reader is somewhat familiar with the general notion of a combination theorem in geometric group theory. We will need combination theorems
that work with negatively curved metric spaces slightly more general than hyperbolic space. The natural generality to work in here is that of metric
spaces that satisfy the `$\CAT(0)$ property', which roughly states that geodesic triangles are thinner than Euclidean triangles with the same edge-lenths.
A detailed definition may be found in Bridson and Haefliger~\cite[Chapter~II.1]{bridson_haefliger}. It is enough to know that the developing
surfaces of hyperbolic cone manifolds are complete $ \CAT(0) $ spaces.

The first result that we recall is on amalgamated products. We add an additional assumption compared to Bridson and Haefliger~\cite[Theorem II.11.18]{bridson_haefliger}
which holds in our setting and which makes the proof slightly easier.
\begin{thm}
  Suppose that $ G_1 $, $ G_2 $, and $H$ are groups acting properly by isometries on respective complete $ \CAT(0) $ spaces $ X_1 $, $ X_2 $, and $ Y $. If there exist
  for $ j \in \{1,2\} $ a pair of monomorphisms $ \iota_j : H \to G_j $ and a pair of $\iota_j$-equivariant isometric embeddings $ f_j : Y \to X_j $
  such that $ g \iota_j(H) \inter \iota_j(H) = \emptyset $ whenever $ g \in G_j \setminus \iota_j (H) $, then the amalgamated product $ G_1 *_H G_2 $ acts by isometries on
  a complete $ \CAT(0) $ space.
\end{thm}
\begin{proof}
  For the sake of notation we will identify $ \iota_1 H $ and $ \iota_2 H $ with $ H $; we also view $ G_1 $, $ G_2 $, and $H$ as subgroups of $ G_1 *_H G_2 $ in the canonical way.
  We recall that there is a \df{normal form} for words in the formal amalgamated product $ G_1 *_H G_2 $~\cite[\S VII.A]{maskit}: namely, every such word can be written essentially uniquely (up to choices
  of coset representatives for $ G_1/H $ and $ G_2/H $) as a product
  \begin{equation}\label{eq:normalform1}
    h g_1 \cdots g_r
  \end{equation}
  where $ h \in H $, and where the elements $ g_i $ are alternately coset representatives for $ G_1/H $ or $ G_2/H $ which
  do not lie in $H$.


  Construct a metric space $ X_1 \intercal_Y X_2 $ via the following iterative process. Take a copy of $ X_1 $, and consider all the translates of $ Y \subset X_1 $
  by elements of $ G_1 $; the translates are indexed by cosets $ G_1/H $, and are all disjoint. Attach to every such translate $ g Y $ a copy of $ X_2 $, so that $ gY \subset X_1 $
  is identified with $ Y \subset X_2 $. For every translate $ g Y \subset X_2 $ where $ g $ ranges over nontrivial coset representatives for $ G_2/H $, attach a copy of $ X_1 $
  so that $ gY \subset X_2 $ is identified with $ Y \subset X_1 $. Continue this process inductively.
  The result is $\CAT(0)$ because it is locally $ \CAT(0) $ and coarsely a tree. We call the copies of $ X_1 $ and $ X_2 $ the \df{flats} of the tree.

  There is a natural action of the group $ G_1 *_H G_2 $ on this tree, coming from the normal form \eqref{eq:normalform1}. The point is that the flats are labelled by cosets
  in $ (G_1 *_H G_2)/H $, i.e.\ by sequences of elements in $ G_i $ and $ G_j $, and this comes with an action of $ G_1 *_H G_2 $ by left-multiplication.
  Concretely, an element of $ G_1 $ acts on each copy of $ X_1 $ in the usual way and permutes the copies of $ X_2 $ while fixing each of them pointwise away from the embedded copy of $ Y $,
  and an element of $ G_2 $ acts on each copy of $ X_1 $ in the usual way while permuting the copies of $ X_2 $. This action is clearly isometric.
\end{proof}

From the proof, we obtain a fundamental domain of the amalgamated group on the tree of flats.
\begin{cor}\label{cor:amalgamation_fd}
  Let $ D_1 $ and $ D_2 $ be fundamental domains for the actions of $ G_1 $ and $ G_2 $ in $ X_1 $ and $ X_2 $, and let $ P $ be a fundamental domain for the action of $ H $ on $Y$
  such that $ f_1(P) \subset D_1 $ and $ f_2(P) \subset D_2 $. Suppose that two flats in $ X_1 \intercal_Y X_2 $ are chosen, one isometric copy of $ X_1 $ and one isometric copy of $ X_2 $,
  and that $ D_1 $ and $ D_2 $ are embedded into $X_1 \intercal_Y X_2$ by the coresponding identity maps. If $ f_1(P) = f_2(P) $ under this embedding, then
  the union $ D_1 \union D_2 $ is a fundamental domain for the group action of $ G_1 *_H G_2 $ on $ X_1 \intercal_Y X_2 $, and $ (X_1 \intercal_Y X_2) / (G_1 *_H G_2) $
  is obtained by gluing $ X_1/G_1 $ and $ X_2/G_2 $ together along $ Y/H $. \qed
\end{cor}

When $G_1$ and $ G_2$ are Kleinian and the maps $ \iota_j $ are conjugacy maps in $ \PSL(2,\C) $ (that is, they are induced by type-preserving
conformal automorphisms of the Riemann sphere), this gives a version of Maskit's first combination theorem~\cite[\S VII.C]{maskit}. We will need
the same statement but for cone manifold holonomy groups.

\begin{figure}
  \labellist
  \small\hair 2pt
  \pinlabel {$\langle h \rangle \leq G_1$} [b] at 48 175
  \pinlabel {$\langle h \rangle \leq G_2$} [b] at 155 172
  \endlabellist
  \centering
  \includegraphics[width=.4\textwidth]{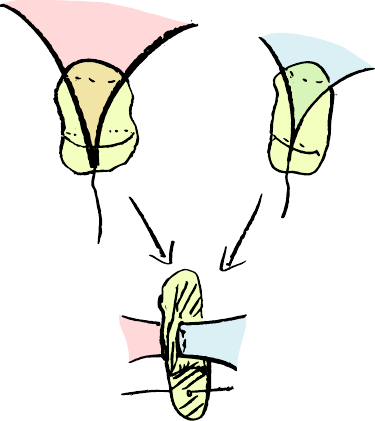}
  \caption{Local picture of an amalgamation along $\langle h \rangle$: the interiors of the yellow balls are sliced out and the two manifolds are glued along the two marked discs.\label{fig:amalgamated_product_abstract}}
\end{figure}

\begin{cor}\label{cor:cone_amalgamation}
  Let $ G_1 = \Hol(M) $ be a hyperbolic cone $3$-manifold holonomy group that admits a finite-sided fundamental domain in $ \H^3 $, with a subgroup $ H = \langle h \rangle $ such
  that $h$ is either elliptic (possibly of infinite order) or parabolic, and such that $ h $ represents a simple closed curve $ \gamma_h$ homotopic to either a puncture or a
  cone-point on the conformal boundary of $M$. Let $ G_2 = \Hol(S) $ be a hyperbolic cone-$2$-manifold group which contains a copy of $H$, so that $S$ has either a puncture
  or a marked point of the same angle as that on $ \partial M $.

  Let $M^{*}$ be the cone $3$-manifold with a once-marked disc on its boundary, obtained by taking a small neighbourhood of $ \partial M $ around the marked point represented by $h$
  (i.e.\ take the marked disc bounded by the projection of the axis of $h$, push it slightly into the interior of $M$, and delete the region it sweeps out). Similarly let $ S^{*} $ be
  the cone $3$-manifold obtained by deleting a small neighbourhood of the marked point on one end of the cone $3$-manifold $ S \times (0,1) $, so $S^{*}$ also has a once-marked disc boundary component.

  Then $ G_1 *_H G_2 $ is the holonomy group of a cone $3$-manifold obtained by gluing $ M^{*} $ and $ S^{*} $ along the two distinguished once-marked discs (\zcref{fig:amalgamated_product_abstract}).
\end{cor}
\begin{proof}
  Since hyperbolic cone manifolds come equipped with a locally finite PL-structure, we can use the usual theory of Kleinian groups~\cite[Proposition~VI.A.10]{maskit} to find that there is a pair of
  small round discs in the plane that are preserved by $H \leq G_1 $ which intersect a fundamental domain of $ G_1 $ to form neighbourhoods of the marked point on the boundary of $ G_1 $. By
  modifying $ \gamma_h $ via isotopy on the boundary surface, we can assume that its lift $\hat{\gamma}_h$ to a fundamental domain for $ G_1$ lies entirely in one of these two discs, bounded by
  a circle $ C_1 $, and that there is a nested disc (say, bounded by the circle $C_2 $) in the same pencil of discs preserved by $ h $ that descends to a neighbourhood of the marked point which
  lies inside the region bounded by $ \gamma_h $. See \zcref{fig:amalgamated_product_domain}.

  Consider now the group $ G_2 $. Since it is a surface group, it preserves a round circle in the plane and acts on the two complementary discs as a subgroup of $ \Isom^+(\H^2) $. Embed one of these discs
  into the small disc $ D_2 $, so that the fixed points of $ h $ in $ G_1 $ and $G_2$ match up. We now have a new polyhedron defined by taking the portion $D_1$ of a polyhedron of $ G_1 $ outside the dome
  in $ \H^3$ above $ C_1 $, and the portion $ D_2 $ of a polyhedron of $ G_2 $ inside the dome in $ \H^3 $ above $ C_2 $. Let $ W_1 $ be one of the walls paired by $ h $ in $D_1$ and let $ W_2 $ be one
  of the walls paired by $ h $ in $D_2$; then draw any embedded surface $W_{12}$ inside the region in $\H^3$ bounded by the two domes that interpolates between the intersection of $W_1$ with the dome above $C_1 $
  and $W_2$ and the dome above $C_2$. The region $ B $ lying between $ W_{12} $ and $ hW_{12} $ in the dome forms a `bridge' between the two halves of the polyhedron already constructed.

  We may now apply \zcref{cor:amalgamation_fd}: $X_1$ and $ X_2$ are respectively the developing spaces of the two hyperbolic cone $3$-manifolds $ M $ and $ S \times [0,1] $, and $ Y $ is the development of the region
  between the projections to $ M $ and $ S \times [0,1] $ of the regions bounded by the two domes---these projections are the small neighbourhoods of the marked points described in the statement of the
  current claim and sketched in \zcref{fig:amalgamated_product_abstract}. The region $B$ takes the role of $P$ in the statement of \zcref{cor:amalgamation_fd}. In particular $ G_1 *_H G_2 $
  has a fundamental domain which is isometric to $ D_1 \union P \union D_2 $. This fundamental domain embeds into $ \H^3 $ by construction, so the quotient $ (X_1 \intercal_Y X_2) / (G_1 *_H G_2) $
  is actually a hyperbolic cone $3$-manifold.
\end{proof}

\begin{figure}
  \labellist
  \small\hair 2pt
  \pinlabel {$G_1$}  at 78 142
  \pinlabel {$G_2$} [tl] at 112 40
  \pinlabel {$\Fix h $} [tr] at 70 33
  \pinlabel {$\hat{\gamma}_h $} [t] at 86 115
  \pinlabel {$C_1 $} [br] at 46 98
  \pinlabel {$C_2 $} [br] at 66 78
  \endlabellist
  \centering
  \includegraphics[width=.4\textwidth]{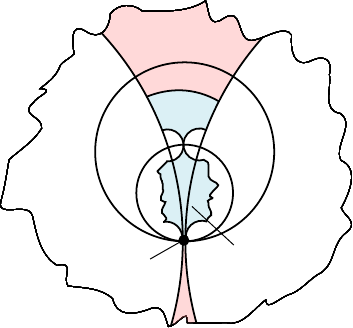}
  \caption{Surgery on domains to prove \zcref{cor:cone_amalgamation}. The labels $G_1 $ and $ G_2 $ indicate the (shaded) fundamental domains of those groups.\label{fig:amalgamated_product_domain}}
\end{figure}

Similar results hold for HNN extensions of cone-manifold groups: again there must be a pair of topological curves on the quotient surfaces that are identified by a M\"obius transformation,
but this time the M\"obius transformation is adjoined to the group. Here, the new fundamental domain is obtained by slicing off parts of the old domain and there is no gluing occuring, so
unlike the case of amalgamated products there is no need to check that the new domain is non-overlapping---it comes for free. The relevant combination theorem for Kleinian groups
is the second Maskit combination theorem~\cite[\S VII.E]{maskit}, and for general $\CAT(0)$ spaces the appropriate theorem is given by Bridson and Haefliger~\cite[Proposition II.18.21]{bridson_haefliger}.
We will simply state the consequence which we need.

\begin{cor}\label{cor:cone_hnn}
  Let $ G = \Hol(M) $ be a hyperbolic cone $3$-manifold holonomy group that admits a finite-sided fundamental domain in $ \H^3 $ and which contains two subgroups $ H_1 = \langle h_1 \rangle $
  and $ H_2 = \langle h_2 \rangle $ so that either $ h_1 $ and $ h_2 $ are both parabolic, or $ h_1 $ and $ h_2 $ are both elliptic of the same rotation angle.

  Let $M^{*}$ be the cone $3$-manifold with two once-marked discs on its boundary, obtained by taking horoball neighbourhoods of $ \partial M $ around the marked points represented by $h_1$ and $ h_2 $
  as in \zcref{fig:amalgamated_product_abstract} (in the picture, set $ G_1 = G_2 $). Then there exists $ f \in \PSL(2,\C) $ such that $ G *_{\langle f \rangle} $
  is the holonomy group of a cone $3$-manifold obtained by gluing the two marked discs on the boundary of $ M^{*} $ together. \qed
\end{cor}

\subsection{Proof of \zcref{thm:main}}\label{sec:proofmain}
We proceed with the proof of the main theorem, and so we assume the notation in the statement: thus $M$ and $M'$ are compression bodies differing by a $2$-handle gluing.
The proof essentially goes by decomposing $M' $ into $[0,0,0]$-triangle groups and continuously replacing some of them with $ [0,0,\theta/2]$-triangle
groups for $ \theta \in (0,2\pi) $ without changing any of the global combinatorics.

\begin{lem}\label{lem:maskit_decomposition}
  We can write $ M' = \H^3/\Gamma $, where $ \Gamma $ is a geometrically finite group of the form
  \begin{displaymath}
    \Gamma = (G_1 *_{\langle x_1 \rangle} G_2*_{\langle x_2 \rangle} \cdots *_{\langle x_{l-1} \rangle} G_l * H_1 * \cdots * H_n * S) *_{\langle y_1 \rangle} \cdots *_{\langle y_m \rangle}
  \end{displaymath}
  where every $ x_i $ is parabolic or the identity, every $ G_i $ is a $[0,0,0]$-triangle group, every $ H_i $ is a rank $2$ parabolic group, $S$ is a Schottky group,
  and each $ y_i \in \PSL(2,\C)$ is loxodromic. Further, all the parabolic subgroups
  appearing in the decomposition appear exactly once up to conjugacy, and the compression end is a compact surface.
\end{lem}
\begin{proof}
  The compression body $ M' $ admits a geometrically finite hyperbolic structure with the property that (i) the compression end is a compact surface;
  (ii) every other end is either a rank $2$ cusp, or a thrice-punctured sphere. This follows (for instance) from the fact that every system of disjoint curves
  on a geometrically finite end can be pinched to parabolics~\cite[Theorem~3.5]{keen93}.

  Such a structure corresponds to a function group with signature $ (K,t) $ which we now describe. For a general signature, $K$ is a complex of Riemann surfaces
  joined by $1$-cells called \df{connectors} labelled with numbers in $ (1,\infty] $ (see \zcref{defn:signature} below). In the special case of interest here for $M'$, it consists of:
  \begin{itemize}
    \item a sphere $S_\epsilon$ for every thrice-punctured sphere $ \epsilon $ of $ M' $;
    \item for every rank $1$ cusp joining two thrice-punctured spheres $ \epsilon $ and $ \epsilon' $ (possibly not distinct), an $ \infty$-connector joining $ S_\epsilon $ and $ S_{\epsilon'} $;
    \item a sphere $ T_\nu $ for every rank $2$ cusp $ \nu $, and an $ \infty$-connector joining $ T_\nu $ to itself.
  \end{itemize}
  The number $t$ is the difference between the genus of the compression end of $M' $ and the sum of the genera of the non-compression ends. We will now show how
  to explicitly construct a group realising this complex, following Maskit's construction of Koebe groups realising any admissible signature~\cite[\S X.F]{maskit}; the
  procedure is very flexible and there are many arbitrary choices to be made, but all possible choices will produce groups that are quasi-conformally conjugate to each other.

  For each connected component of $K$ we construct a group and an associated fundamental domain in $ \hat{\C} $ cut out by circles.

  \stephead{Step 1}
  For every connected component consisting of $n>1$ spheres, it is possible to linearly order the spheres $ S_1,\ldots, S_n $ so that $ S_i $ and $ S_{i+1} $ are
  always connected by an $\infty$-connector. Inductively produce a chain $n$ of $ [0,0,0]$-triangle groups as follows: let $ G_1 = \langle x_0, x_1 \rangle $ be an
  arbitrary triangle group, where the marked generating set consists of two generators of maximal parabolic subgroups, which preserves a round circle $ \Sigma_1 $.
  This triangle group preserves two round discs, one bounded and one unbounded.

  For $ j > 1 $, let $ B_{j-1}^+ $ denote the unbounded disc preserved by $ G_{j-1} $. Recall from \zcref{defn:canonical_quad} that a fundamental domain for the action of $ G_{j-1} $ on $ B_{j-1}^+$ is
  given by the canonical quadrilateral with vertices
  \begin{equation}\label{eq:chain_dommains}
    \Fix x_{j-2},\; \Fix x_{j-1},\; \Fix x_{j-2} x_{j-1},\quad\text{and}\quad \Fix x_{j-1} x_{j-2}.
  \end{equation}
  Let $ \Sigma_j $ be a horosphere in the hyperbolic metric on $ B_{j-1}^+ $ based at $ \Fix x_{j-1} $ which meets only the
  two edges of this quadrilateral that are incident with $ \Fix x_{j-1} $. Let $ G_j $ be an $[0,0,0]$-triangle
  group which preserves the two discs bounded by $ \Sigma_j $, generated by $ x_{j-1} $ and some additional parabolic $ x_j $ such
  that $ \Fix x_j x_{j-1} $ and $ \Fix x_{j-1} x_j $ lie outside the canonical quadrilateral for $ G_{j-1} $ while $\Fix x_{j} $ lies inside it (see \zcref{fig:chain_of_discs}).

  \begin{figure}
    \begin{subfigure}{\textwidth}
      \labellist
      \small\hair 2pt
      \pinlabel {$\Sigma_1$} [bl] at 189 265
      \pinlabel {$\Sigma_2$} [bl] at 293 239
      \pinlabel {$\Sigma_3$} [bl] at 397 180
      \pinlabel {$\Fix x_0$} [r] at 27 196
      \pinlabel {$\Fix x_1$} [r] at 146 58
      \pinlabel {$\Fix x_2$} [r] at 210 39
      \pinlabel {$\Fix x_3$} [l] at 424 140
      \pinlabel {$\Fix x_0 x_1$} [l] at 190 321
      \pinlabel {$\Fix x_1 x_0$} [r] at 98 96
      \pinlabel {$\Fix x_1 x_2$} [l] at 313 300
      \pinlabel {$\Fix x_2 x_1$} [l] at 330 31
      \pinlabel {$\Fix x_2 x_3$} [l] at 398 272
      \pinlabel {$\Fix x_3 x_2$} [l] at 366 56
      \endlabellist
      \centering
      \includegraphics[width=.5\textwidth]{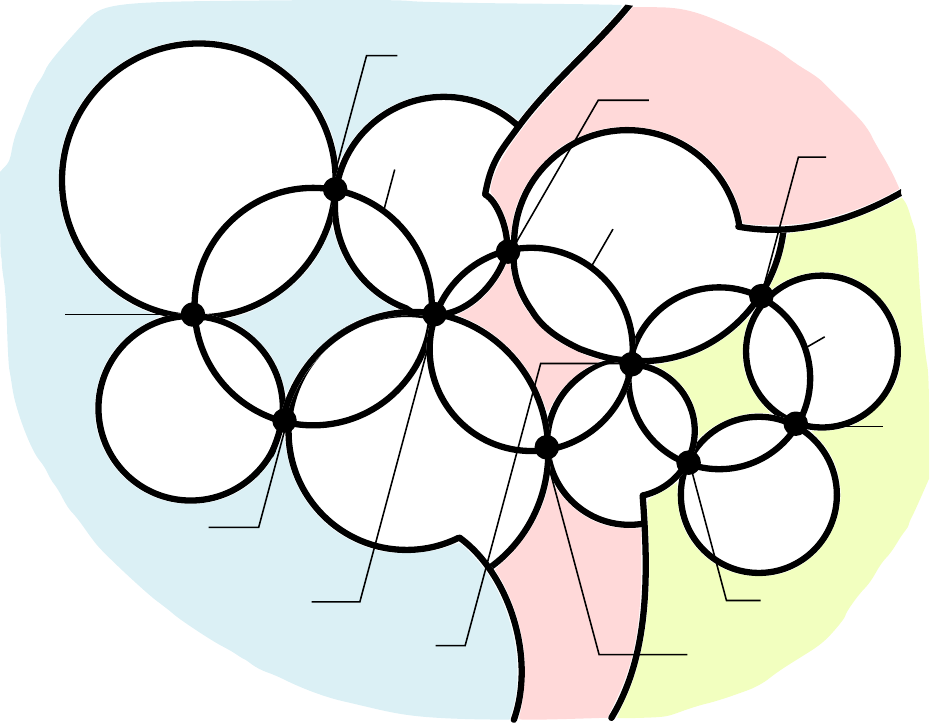}
      \caption{The chain of tangent discs forming a fundamental domain.}
    \end{subfigure}\\
    \begin{subfigure}{\textwidth}
      \labellist
      \small\hair 2pt
      \pinlabel {$x_0$} [r] at 2 80
      \pinlabel {$x_1$} [t] at 108 195
      \pinlabel {$x_2$} [t] at 291 190
      \pinlabel {$x_3$} [l] at 457 104
      \pinlabel {$x_0 x_1$} [bl] at 86 112
      \pinlabel {$x_1 x_2$} [r] at 206 120
      \pinlabel {$x_2 x_3$} [br] at 335 116
      \endlabellist
      \centering
      \includegraphics[width=.5\textwidth]{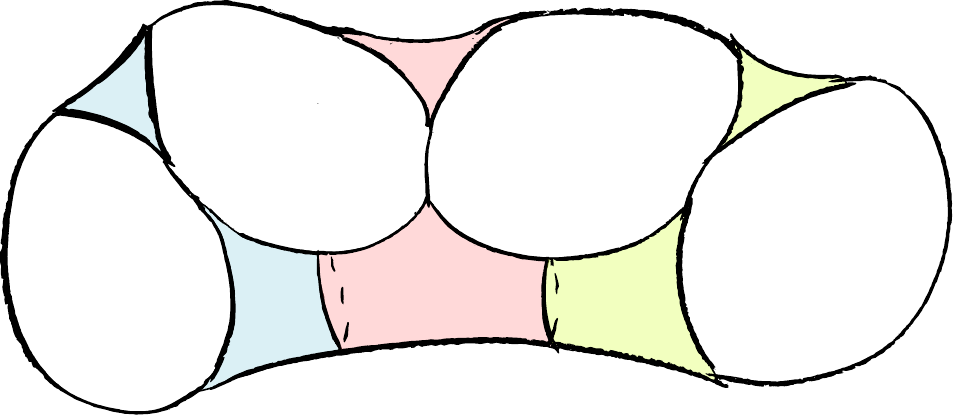}
      \caption{Arrangement of quotient surfaces.}
    \end{subfigure}
    \caption{A sequence of amalgamated products of $[0,0,0]$-triangle groups along boundary parabolics.\label{fig:chain_of_discs}}
  \end{figure}

  Having done this for every connected component of $K$ of this type, relabel all the $G_i $ and $ x_j $ so that the groups for the first connected component are $ G_1, \ldots, G_{l_1} $
  and the shared parabolics are $ x_1, \ldots, x_{l_1 - 1} $, those for the second are $ G_{l_1 + 1}, \ldots, G_{l_1 + l_2} $ and $ x_{l_1 + 1} , \ldots, x_{l_1 + l_2 - 1} $,
  and so on. Note that we have skipped $ x_{l_1} $, $ x_{l_1 + l_2} $, and so on; set these to the identity.

  \stephead{Step 2}
  For every connected component $ \tau_i $ ($ i \in \{1,\ldots,n\} $) of $K$ consisting of a single sphere (which is necessarily connected to itself by an $\infty$-connector), construct two
  orthogonal pairs of tangent circles meeting at a point $ \xi $, and let $ H_i $ be the rank $2$ parabolic group with fixed point $ \xi $ and fundamental domain consisting of the common
  exterior of the circles.

  \stephead{Step 3}
  Let $ S $ be the Schottky group of rank $ t $ obtained by drawing $2t$ mutually disjoint round circles with common exterior and pairing them arbitrarily with $t$ hyperbolic transformations.

  \stephead{Step 4}
  Conjugating all the groups constructed in Steps 1--4 by M\"obius transformations if necessary, we may assume that all the circle patterns constructed are disjoint, and that each connected set
  of circles lies in the common exterior of all the other connected sets of circles, and in fact every set of circles lies in the interior of an unbounded piece of fundamental domain of every
  group $ G_1,\ldots,G_l $, $ H_1,\ldots,H_n$, and $S$. By \zcref{cor:cone_amalgamation}, the resulting pattern forms a fundamental domain for the group
  \begin{displaymath}
    \hat\Gamma = G_1 *_{\langle x_1 \rangle} G_2 *_{\langle x_2 \rangle} \cdots *_{\langle x_{l-1} \rangle} G_l * H_1 * \cdots * H_n * S.
  \end{displaymath}

  There may still exist $\infty$-connectors in the complex $K$ which are not realised by one of the $ x_i $. These correspond to rank $1$ parabolics in $ M' $, and so we must realise
  them by HNN extensions that glue together cusp neighbourhoods of the corresponding parabolic fixed points. Let $\mc{H}$ denote the set of remaining connectors.

  \stephead{Step 5}
  For each pair of spheres joined by an element of $\mc{H}$, arbitrarily
  pick one of the four vertices of the canonical fundamental domains of \eqref{eq:chain_dommains} for each of the corresponding $ G_i $'s which is not glued to an adjacent $ G_{i \pm 1} $
  by the construction of tangent chains in (1) above; do this simultaneously for all connectors. We obtain for each connector a pair of parabolic fixed points, all of which are
  mutually non-conjugate in $ \hat{\Gamma} $, and which are all non-conjugate to any of the $ x_i $ that lie in intersections $ G_i \inter G_{i+1} $ (the $x_i$ at the beginning and end of each
  chain will all appear in this list). For each of these parabolic fixed points, say in $ G_i $, choose a small tangent circle to $\Sigma_i $ at the fixed point which lies in the common exterior of all
  the circles constructed in (1)--(3); these circles should be sufficiently small to be a cusped region for each parabolic (\zcref{fig:hnn_extension}). These exist since in our construction above we ensured that no
  constructed discs came arbitrarily close to the fixed points of parabolics that were not being amalgamated; abstractly, it follows since $\hat\Gamma$ is geometrically finite~\cite[\S VII.A.6--10]{maskit}).
  Now for each $\infty$-connector $ \eta \in \mc{H}$ choose a loxodromic element $ y_\eta \in \PSL(2,\C) $ which maps the bounded disc of one of the two corresponding circles onto the unbounded disc of
  the other and sends the point of tangency (i.e.\ parabolic fixed point) to the point of tangency.

  \stephead{Step 6}
  Some parabolic fixed points will still not be decorated with horocycles on their unbounded side, but any parabolics of this form will be $\hat\Gamma$-conjugate to parabolics whose
  fixed points are basepoints of horocycles; assign corresponding cusped regions to these fixed points by translating the cusped regions for the conjugates via the conjugating element.

  \begin{figure}
  \begin{subfigure}{\textwidth}
    \labellist
    \small\hair 2pt
    \pinlabel {$\hat{y}$} [b] at 134 134
    \pinlabel {$y$} [b] at 138 23
    \endlabellist
    \centering
    \includegraphics[width=.6\textwidth]{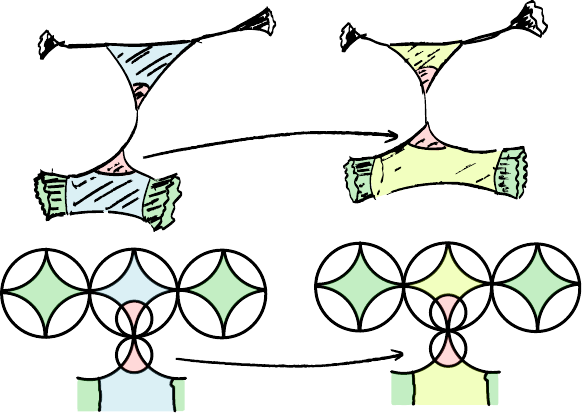}
    \caption{Horoballs identified by the stable element $y$ of some HNN extension, and the gluing homeomorphism $ \hat{y} $ on the quotient manifold.}
  \end{subfigure}\\[1.5em]
  \begin{subfigure}{\textwidth}
    \centering
    \includegraphics[width=.6\textwidth]{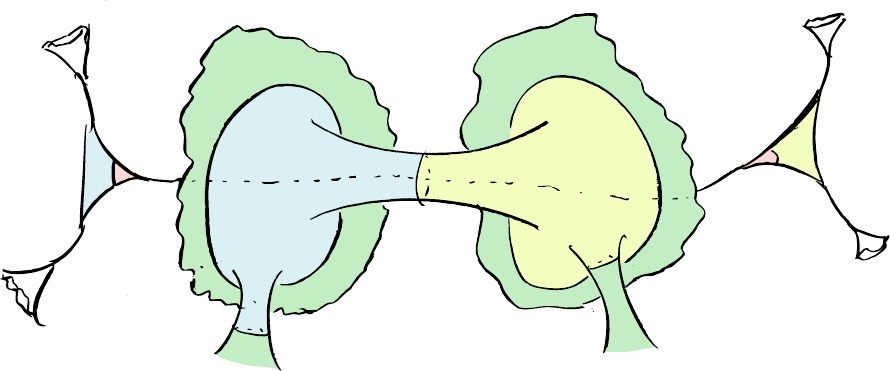}
    \caption{The manifold obtained after performing gluing by $ \hat{y}$. Unlike the other pictures, we view this manifold from the exterior so as to get a good view of the new $1$-handle (with core
             loop represented by $ y $) which is produced.}
  \end{subfigure}
  \caption{The geometric meaning of HNN extensions.\label{fig:hnn_extension}}
  \end{figure}

  \stephead{Step 7}
  Relabel the elements $ \{y_\eta\}_{\eta \in \mc{H}} $ as $ y_1,\ldots, y_m $; by \zcref{cor:cone_hnn}, the group
  \begin{displaymath}
    \hat\Gamma *_{\langle y_1 \rangle} \cdots *_{\langle y_m \rangle}
  \end{displaymath}
  has $\H^3$-quotient with the desired quasiconformal type. A fundamental domain for the action of $ \Gamma $ on its domain of discontinuity is obtained from the fundamental
  domain of $ \hat\Gamma $ by slicing out all the cusped regions constructed in steps 5 and 6.
\end{proof}

  \begin{figure}
  \begin{subfigure}{\textwidth}
    \centering
    \includegraphics[width=.6\textwidth]{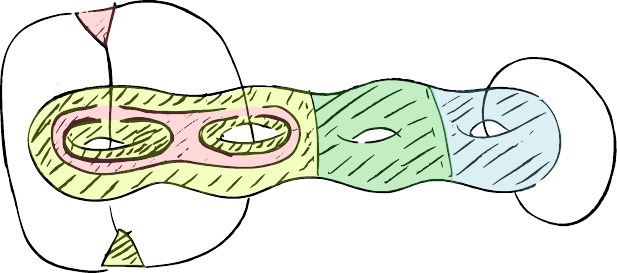}
    \caption{The compression body manifold, with three rank $1$ cusps and one rank $2$ cusp.\label{fig:example_complex_1}}
  \end{subfigure}\\[1.5em]
  \begin{subfigure}{\textwidth}
    \labellist
    \small\hair 2pt
    \pinlabel {$G_1$} at 173 37
    \pinlabel {$G_2$} at 47 151
    \pinlabel {$S$} at 205 61
    \pinlabel {$H$} [b] at 249 239
    \pinlabel {$y_2$} [b] at 118 229
    \pinlabel {$x_1^{-1}y_2x_1$} [t] at 104 51
    \pinlabel {$y_1$} [t] at 50 51
    \endlabellist
    \centering
    \includegraphics[width=.6\textwidth]{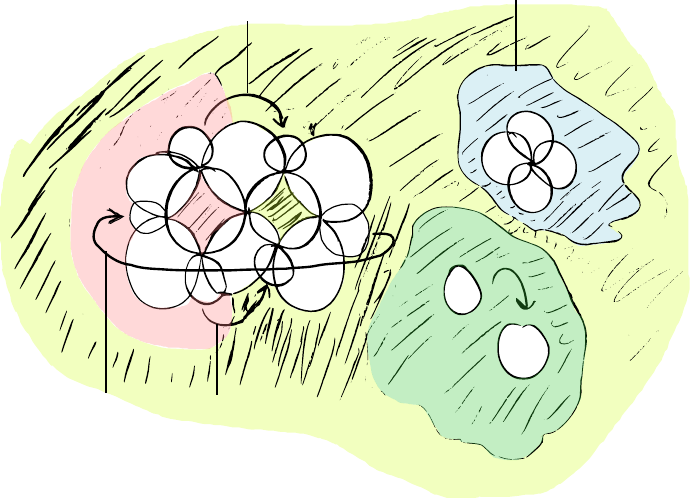}
    \caption{The fundamental domain produced by following \zcref{lem:maskit_decomposition}.\label{fig:example_complex_2}}
  \end{subfigure}
  \caption{Maskit decomposition of a genus $4$ compression body.\label{fig:example_complex}}
\end{figure}
\begin{ex}
  The hyperbolic compression body depicted in \zcref{fig:example_complex_1} has corresponding function group $\Gamma$ with signature
  \begin{displaymath}
    \Sign(\Gamma) = \Bigg( \begin{tikzpicture}[baseline=-.12cm]
                                      \draw[fill=red!30] (-.75,0) ellipse (.33cm and .75cm);
                                      \draw[fill=yellow!30] (.75,0) ellipse (.33cm and .75cm);
                                      \draw (-.75,.4) --node[above]{$\infty$} (.75,.4);
                                      \draw (-.75,0) --node[above]{$\infty$} (.75,0);
                                      \draw (-.75,-.4) --node[above]{$\infty$} (.75,-.4);
                                      \draw[fill=blue!30] (1.75,0) ellipse (.33cm and .5cm);
                                      \draw (1.75, .3) to[bend left=90, looseness=3] node[right]{$\infty$} (1.75,-.3);
                                    \end{tikzpicture}\,,\; 1 \Bigg).
  \end{displaymath}
  The procedure in \zcref{lem:maskit_decomposition} gives us a decomposition
  \begin{displaymath}
    (G_1 *_{\langle x_1 \rangle} G_2 * H * S) *_{\langle y_1 \rangle} *_{\langle y_2 \rangle}
  \end{displaymath}
  where $ G_1 = \langle x_0, x_1 \rangle $ and $ G_2 = \langle x_1, x_2 \rangle $ are $[0,0,0]$-triangle groups marked with parabolic generators, $H$ is
  an elementary rank $2$ parabolic group, $S$ is a genus $2$ Schottky group, and $ y_1, y_2 \in \PSL(2,\C) $ are loxodromic elements such that $ y_1 $
  conjugates $ x_0 $ to $ x_2 $, and $ y_2 $ conjugates $ x_0 x_1 $ to $ x_1 x_2 $. The exact arrangement of the actions of these groups is shown in \zcref{fig:example_complex_2},
  where the entire shaded region is a fundamental domain for $ \Gamma $.
\end{ex}

We choose a marking for each $ G_i $, writing $ G_i = \langle a_i, b_i \rangle $ where the elements $ a_i $ and $ b_i $ generate maximal parabolic subgroups,
and $ H_i = \langle c_i, d_i \rangle $ where both $ c_i $ and $ d_i $ are parabolic.

\begin{figure}\centering
  \begin{subfigure}{.5\textwidth}
    \centering
    \includegraphics[width=.9\textwidth]{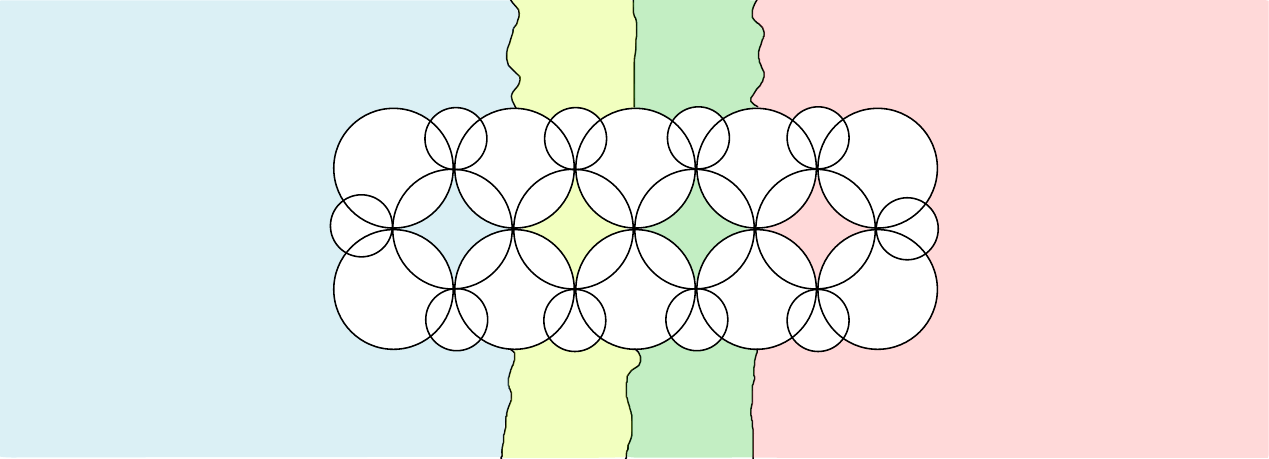}
    \caption{$\theta=0$}
  \end{subfigure}%
  \begin{subfigure}{.5\textwidth}
    \centering
    \includegraphics[width=.9\textwidth]{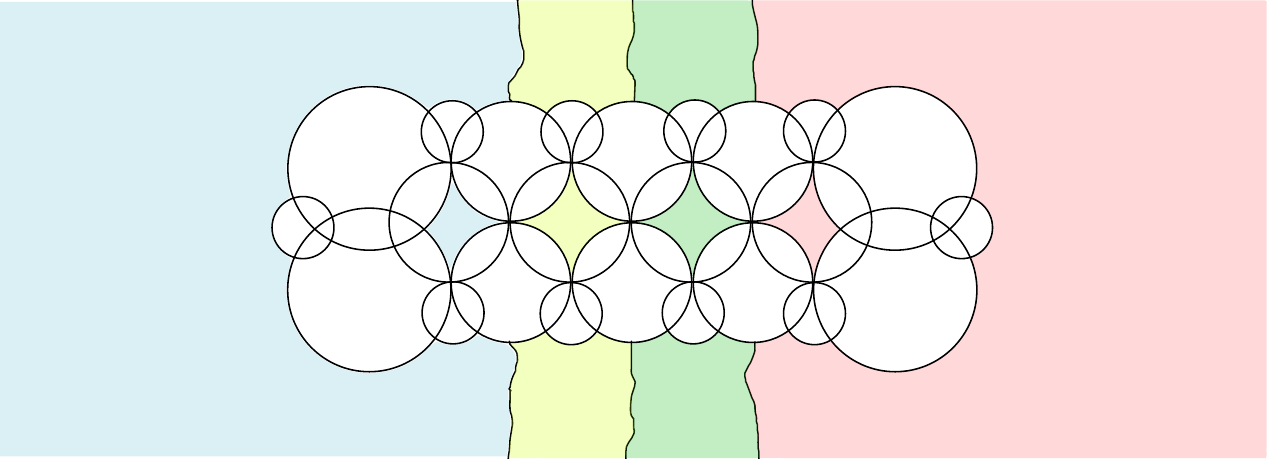}
    \caption{$0<\theta<\pi$}
  \end{subfigure}\\
  \begin{subfigure}{.5\textwidth}
    \centering
    \includegraphics[width=.9\textwidth]{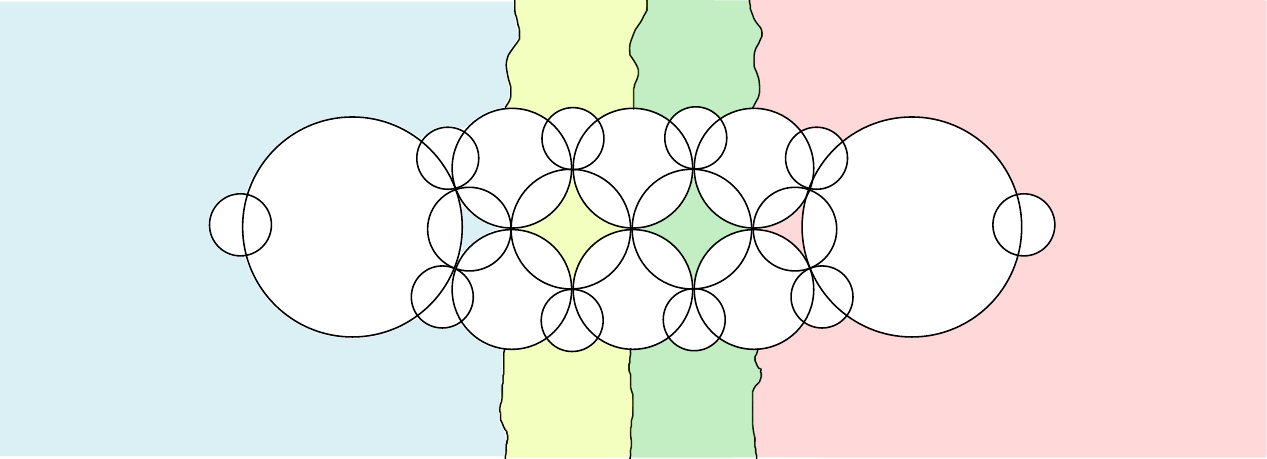}
    \caption{$\theta=\pi$}
  \end{subfigure}%
  \begin{subfigure}{.5\textwidth}
    \centering
    \includegraphics[width=.9\textwidth]{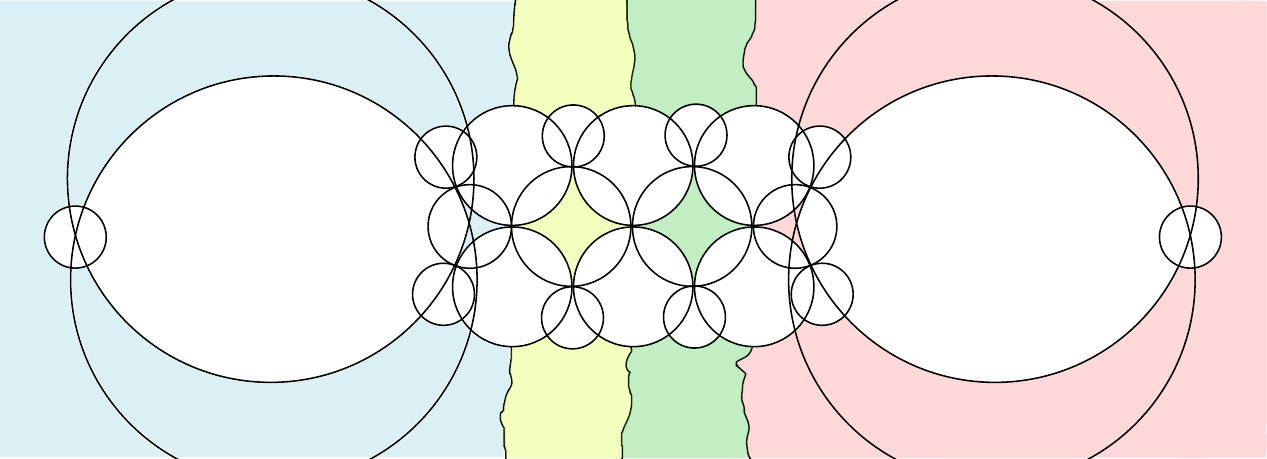}
    \caption{$\pi<\theta<2\pi$}
  \end{subfigure}\\
  \begin{subfigure}{.5\textwidth}
    \centering
    \includegraphics[width=.9\textwidth]{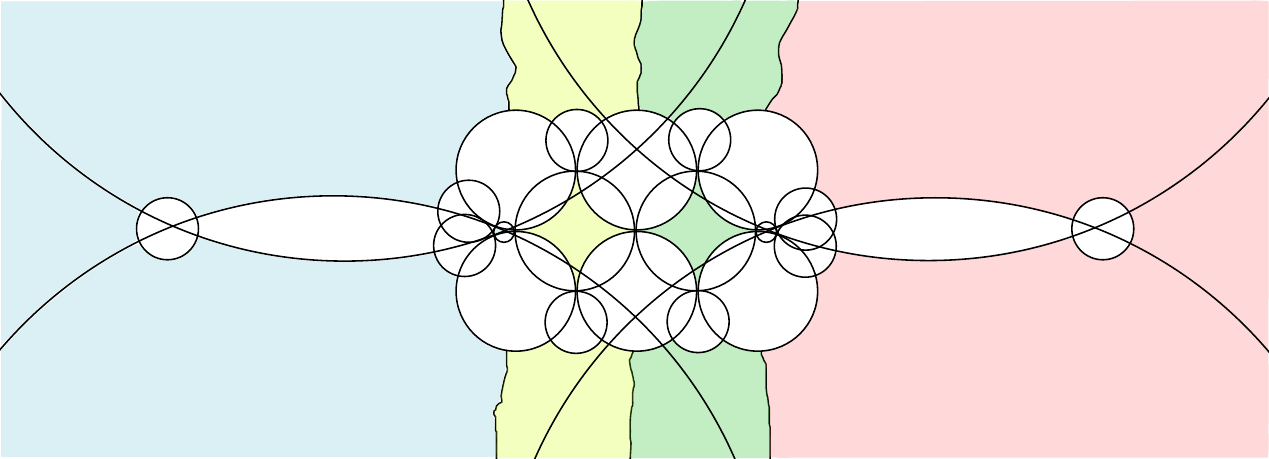}
    \caption{$\theta\to 2\pi$}
  \end{subfigure}%
  \begin{subfigure}{.5\textwidth}
    \centering
    \includegraphics[width=.9\textwidth]{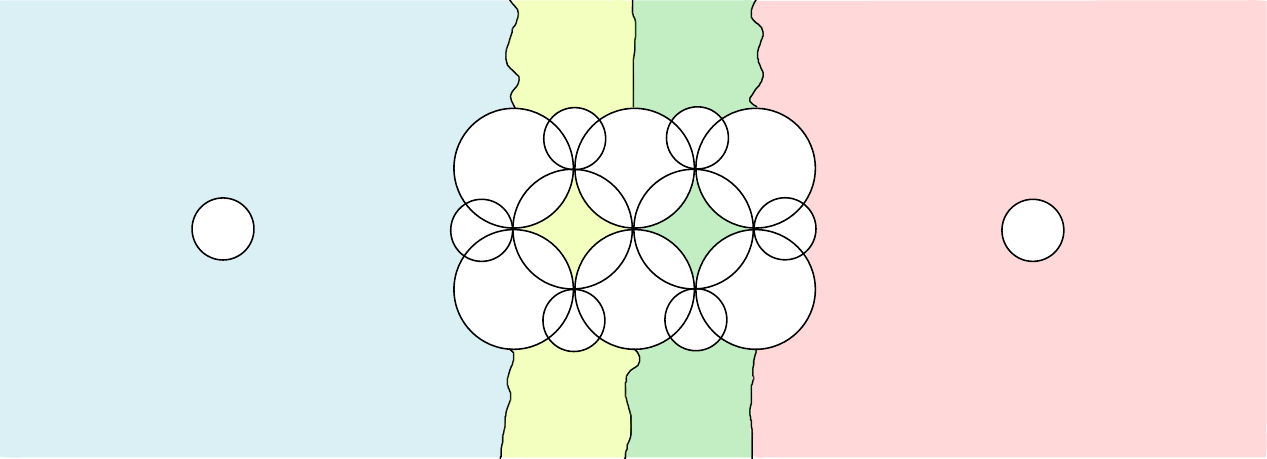}
    \caption{$\theta = 2\pi$}
  \end{subfigure}
  \caption{The deformation of a single connector from a parabolic with holonomy angle $ \theta = 0$, through elliptics of various angles $ \theta \in (0,2\pi) $, to the identity at $ \theta = 2\pi$.\label{fig:deform}}
\end{figure}

\begin{lem}\label{lem:thetalimit}
  For any $ i \in \{1,\ldots,l\} $, one can replace $ x_i $ in \zcref{lem:maskit_decomposition} with an
  elliptic $ x_i^\theta $ of order $ \theta \in [0,2\pi) $. That is, there exists a group
  \begin{displaymath}
    \Gamma^\theta = (G^\theta_1 *_{\langle x^\theta_1 \rangle} *_{\langle x^\theta_2 \rangle} G^\theta_2 \cdots *_{\langle x^\theta_{l-1} \rangle} G^\theta_l * H^\theta_1 * \cdots * H^\theta_n) *_{\langle y^\theta_1 \rangle} \cdots *_{\langle y^\theta_m \rangle}
  \end{displaymath}
  such that
  \begin{itemize}
    \item For all $ i' \neq i $, the element $ x^\theta_{i'} $ is the same type as $ x_i $, and for $ i' = i $ the element $ x^\theta_{i'} $ is elliptic with holonomy angle $\theta $;
    \item For all $ i' $, the element $ y^\theta_{i'} $ is loxodromic;
    \item If $ G_k $ contains $ x_i $, then $ G_k^\theta $ is an $ [0,0,\theta/2] $-triangle group, and otherwise $ G_k^\theta $ is an $ [0,0,0]$-triangle group;
    \item The groups $ G_k^\theta $ and $ H_k^\theta $ have markings, namely $ G_k^\theta = \langle a_k^\theta, b_k^\theta \rangle $ and $ H_k^\theta = \langle c_k^\theta, d_k^\theta \rangle $,
          such that the map on generators
          \begin{displaymath}\begin{tikzcd}[row sep = scriptsize]
            (a_1, \ldots, a_{l-1}, b_1, \ldots, b_{l-1}, c_1, \ldots, c_n, d_1, \ldots, d_n, y_1, \ldots, y_m) \arrow[d, mapsto] \\ (a^\theta_1, \ldots, a^\theta_{l-1}, b^\theta_1, \ldots, b^\theta_{l-1}, c^\theta_1, \ldots, c^\theta_n, d^\theta_1, \ldots, d^\theta_n, y^\theta_1, \ldots, y^\theta_m)
          \end{tikzcd}\end{displaymath}
          extends to an epimorphism $ \phi^\theta : \Gamma \mapsto \Gamma^\theta $ that is type-preserving away from the set $ \{ g^{-1} x_i g : g \in \Gamma \} $;
    \item As $ \theta \to 2\pi $, the two $[0,0,\theta/2]$-triangle groups containing $ x_i^\theta $ converge to cyclic parabolic groups;
    \item $\phi^\theta$ is induced by a homeomorphism $ \Phi^\theta : \H^3/\Gamma = M' \to M^\theta \setminus \Sing(M^\theta) $ where $ M^\theta $ is a cone manifold
          with a single singular arc of angle $ \theta $ whose cylindrical neighbourhood is the image under $ \Phi^\theta$ of a cusp neighbourhood of $ x_i $; and
    \item All maps and metrics involved are smooth in $ \theta $.
  \end{itemize}
\end{lem}
\begin{proof}
  The constructions in \zcref{lem:maskit_decomposition} involve the construction of a family of circles such that the embeddings of $ [0,0,0]$-triangle groups
  into the discs which they bound give the Maskit decomposition of the manifold $ M' $. One can carry out exactly the same construction if the triangle
  groups containing $ x_i $ are replaced by $ [0,0,\theta/2]$-triangle groups, except any step in the construction involving tangent circles at $ \Fix x_i $ should be replaced
  with circles lying in the elliptic and hyperbolic pencils with limits at the two fixed points of $ x_i^\theta $. For the limit to be correct, it is necessary for the fixed
  point of $ x_i^\theta $ lying in the bounded disc $D$ preserved by the $ [0,0,\theta/2]$-triangle group to converge as $ \theta \to 2\pi $ to the opposite vertex of the quadrilateral,
  while the fixed point lying outside $D$ remains fixed. The circle surrounding the latter fixed point (one of the sides paired by the HNN extension that acts
  by conjugacy on $ x_i^\theta $) should be chosen to have small radius, and the two circles paired by $ x_i $ should have radius tending to $\infty$ as $\theta\to 2\pi $.
  This necessarily implies that the discs bounded by the triangle groups containing $ x_i^\theta $ and its conjugate converge to points as $ \theta \to 2\pi $. Finally, the two
  additional circles paired by HNN extensions that are tangent to $ \partial D $ must converge as $ \theta \to 2\pi $, since necessarily the two HNN extensions themselves must converge
  to the same transformation.

  Everything involved can be chosen to move smoothly with $\theta$, since the only constraints are that the various circles constructed do not become close to each other, and this can be arranged
  by shrinking them and shifting them around by continuously varying M\"obius transformations as $ \theta \to 2\pi $. The two groups containing $x_i^\theta$ converge to elementary
  parabolic groups, as in the final frame of \zcref{fig:triangles}. The process should be clarified by \zcref{fig:deform}.
\end{proof}

\begin{proof}[Completion of the proof of \zcref{thm:main}]
  \zcref[S]{lem:thetalimit} shows that one can glue a $2$-handle onto a curve on a non-compression end with genus at least two. It remains to show that one can do the same for curves on a genus one
  non-compression end, and that one can decrease the genus of the compression end via a cone deformation (i.e.\ glue a $2$-handle along a curve which has intersection number $1$ with a
  compression disc boundary).

  First, the genus $1$ end case corresponds to a deformation of a component $H_i$ of $ \Gamma $ which is a rank $2$ elementary parabolic group through
  elementary indiscrete groups generated by a commuting loxodromic and an elliptic of order $ \theta \in (0,2\pi) $ to a group generated by a single
  loxodromic, the elliptic having converged to the identity. Again this is a local move on the fundamental domain, and so this can be done without interacting
  with other component subgroups of $ \Gamma $ that are amalgamated to $ H_i $ by a free product.

  A genus $ n $ classical Schottky group can be deformed into a genus $n-1$ classical Schottky group for all $ n > 1 $ by a similar construction to the rank $2$ parabolic case,
  i.e.\ by pushing the two round circles paired by the $n$th generator together and so deforming that generator through elliptics of order $ (0,2\pi) $ so as to converge to the identity,
  while keeping the remainder of the fundamental domain fixed.

  The theorem now follows follows with the observation that for all of these constructions the two faces of the fundamental domain for $\Gamma^\theta$ containing the axis of $ x_i^\theta $
  converge to embedded surfaces within a fundamental polyhedron for $M$ as $ \theta \to 2\pi $.
\end{proof}

\subsection{A Koebe--Maskit existence theorem}\label{sec:koebemaskit}
As a tangential remark, our combination theorems can be used to prove a version of the Maskit--Koebe uniformisation theorem for function groups in the more general setting
of cone manifolds. We will follow the exposition given by Maskit~\cite[Chapter~X]{maskit}, but since the proof follows the same outline as \zcref{lem:maskit_decomposition}
we will try to be brief.

\begin{defn}\label{defn:signature}
  A \df{signature} is a pair $ (K, t) $ where $ t \geq 0 $ is an integer and $ K $ is a complex consisting of a marked, finite type, possibly disconnected Riemann
  surface $X$ together with a number of $1$-cells joining the marked points of $X$, called \df{connectors}. Each connector is labelled with a value from $ (1,\infty] $.
  A \df{part} of $K$ is a connected component of $ X$. The Euler characteristic of a marked Riemann surface $S$ of genus $ g $ with $n$ marked points
  labelled $ (\alpha_1,\ldots,\alpha_n) $ (so $ \theta_j = 2\pi/\alpha_j $ is the cone angle at each marked point) is the sum
  \begin{displaymath}
    \chi(S) = 2 - 2g - \sum_{i=1}^n (1 - \frac{2\pi}{\alpha_i})
  \end{displaymath}

  We say that the signature $ (K,t) $ is \df{admissible} if it satisfies the following axioms:
  \begin{enumerate}
    \item If there is a part which is a sphere with no marked points, then it is the only part of $X$.
    \item There is no part which is a sphere with a single marked point.
    \item For every part $P$, if $ \chi(P) \geq 0 $ (i.e.\ $P$ is \df{non-hyperbolic}) then $P$ is either
      \begin{enumerate}
        \item a \df{spherical triangle}: a $3$-marked sphere with markings $ (2,\alpha,\beta) $ where $ \alpha, \beta \geq 2 $,
        \item a \df{torus}: a $2$-marked sphere with markings $ (\nu, \nu) $ for $ \nu \in [2,\infty] $,
        \item a \df{Euclidean triangle}: a $3$-marked sphere with markings $ (\alpha,\beta,\gamma) $ so that $ 1/\alpha + 1/\beta + 1/\gamma = 1 $, or
        \item a \df{pillow}: a $4$-marked sphere with markings $(2,2,2,2)$.
      \end{enumerate}
    \item If $ P_1 $ and $ P_2 $ are distinct parts, each a thrice-marked sphere with markings $ (2,2,\nu) $,
          then there is no $\nu$-connector between $P_1$ and $ P_2 $.
  \end{enumerate}
\end{defn}

\begin{rem}
  Maskit's definition of signature~\cite[\S X.D]{maskit} is actually defined for function groups. Recall that a function
  group $\Gamma$ has a distinguished connected component $ \Omega^{*} $ of its domain of discontinuity preserved by $\Gamma$
  so that $ \Omega^{*}/\Gamma $ is the compression end of $ \H^3/\Gamma$. The compression discs
  of $ \H^3/\Gamma $ can be detected by homotopically nontrivial loops in $\Omega^{*}$. Accidental parabolics and elliptics
  correspond to arcs $ \alpha \subset \Omega^{*} $ so that there is an intermediate covering space $ \Omega^{*} \to F \to \Omega^{*}/\Gamma $
  where $ \alpha $ descends to a closed loop in $ F $. The parabolic or elliptic element itself can be recovered from the deck transformation
  group of the maximal such $F$. It appears that much of this machinery goes through when $ \Gamma $ is replaced with cone manifold group $\Hol(M)$ with the following property:
  \begin{equation}\label{eq:conefg}
    \parbox{.75\textwidth}{\centering the visual boundary of the isometric development $ \hat{M} $ has a connected component which is preserved by $ \Hol(M) $.}
  \end{equation}
  This includes the key \textit{planarity theorem} of Maskit~\cite{maskit65p}, see
  also Maskit~\cite[\S X.A]{maskit} and Bowditch~\cite{bowditch22}, since the proof given in \cite{maskit} relies only on \emph{topological} planarity of the domain of discontinuity
  (which holds for cone manifolds, as the isometric developing space is homeomorphic to a sphere) rather than \emph{conformal} planarity (which does not hold for any cone manifolds
  that are not manifolds or orbifolds). In particular, though we do not develop the theory in full in this paper as it would be a large digression from the theme of our
  main results in geometric topology, one should be able to fully classify the `cone manifold function groups' $ \Hol(M) $ satisfying \eqref{eq:conefg} using Maskit's machinery
  without knowing \textit{a priori} that the underlying cone manifold $M$ is a compression body.
\end{rem}

We will define a class of cone manifolds which generalise the compression body manifolds that are uniformised by function groups.
\begin{defn}
  A hyperbolic cone $3$-manifold $M$ is a \df{Koebe--Maskit compression body} if $ M \setminus \Sing(M) $ is a compression body and the
  compression end is compact.
\end{defn}
By the general Ahlfors-Bers theory of deformations of hyperbolic $3$-manifolds and later classification work of geometrically infinite hyperbolic manifolds,
every hyperbolic structure on a compression body lies in the closure of a quasiconformal deformation space of some Koebe--Maskit compression body. The main difference in this paper
is that we also allow elliptics (of possibly infinite order) anywhere that a parabolic can occur.

\begin{lem}
  To each Koebe--Maskit compression body $M$ there exists an assignment $ \Sign(M) $ of a signature so that $ \Sign(M) = \Sign(N) $ if and only if there is
  a type-preserving homeomorphism $ M \to N $: i.e.\ a homeomorphism which sends loops represented by parabolic holonomy elements (resp. elliptic elements) to loops represented
  by parabolic elements (resp. elliptic elements of the same order).
\end{lem}
\begin{proof}
  The proof of well-definedness is exactly that given in Maskit~\cite[\S X.D]{maskit}, so we only sketch the construction itself. Take the compression end
  $ S $, and consider a maximal set $D$ of simple closed essential curves on $ S $ which are represented in $ \Hol(M) $ by elliptic elements, parabolic elements, or the identity:
  i.e.\ they bound embedded discs or $1$-marked discs in $M$. Cut $ S $ along every curve in $D$; for each curve we now have a pair of circle boundary components.
  If a pair is obtained by cutting along a curve represented by a parabolic, glue in two once-marked discs to the corresponding boundaries together with an $\infty$-connector
  joining their centres. If a pair is obtained by cutting along an curve represented by an elliptic of angle $ \theta $, glue in two once-marked discs joined by a $2\pi/\theta$-connector,
  Finally if a pair is obtained by cutting along a compression disc boundary then glue in two unmarked discs. The result is a complex $ K $. Let $ t $ be the number of compression disc
  boundaries that were replaced by pairs of unmarked discs. Then $ \Sign(M) = (K,t) $.
\end{proof}

The following result has been proved independently by a number of authors~\cite{mcowen88,troyanov91,heins62,feng25}, see also Chen and Zhong~\cite{chen25}.
\begin{lem}[Berger--Nirenberg problem]\label{lem:bn_prob}
  If $ S $ is any marked Riemann surface with $ \chi(S) < 0 $, then for any conformal structure on the complement
  of the marked points there exists a unique hyperbolic cone manifold structure on $ S $ which agrees with that conformal
  structure and which has the angles at every marked point given by the marking. The structure is continuous upon variation of the angles. \qed
\end{lem}

We now show that every signature is admissible: that is, one can replace many parabolics in a compression body with compact compression end with singularities of any cone angle in
the interval $ [0,2\pi] $ and the result will still be hyperbolic. This generalises Maskit~\cite[Theorem~X.F.2]{maskit}. We note
that Maskit's arguments are somewhat more involved, because he defines signatures in an entirely complex-analytic way using his Planarity Theorem
and does not use any three-dimensional topology machinery (his signatures are invariants of the group action on $ \hat{\C} $, rather than of the
quotient manifold).
\begin{thm}
  Every admissible signature is realised by some hyperbolic structure on a Koebe--Maskit compression body. More precisely, for every admissible signature $(K,t)$ there
  exists a set of hyperbolic cone surface groups and reducible subgroups of $ \PSL(2,\C) $ together with a sequence of amalgamated products
  and HNN extensions that combines these groups into the holonomy group of a hyperbolic cone manifold
  with signature $ (K,t) $.
\end{thm}
\begin{proof}
  For each part $P$, define a Kleinian group $G_P$ which realises it in the following sense:
  \begin{enumerate}
    \item If $ \chi(P) < 0 $ (hyperbolic case) then $G_P$ is a cone surface group as constructed by \zcref{lem:bn_prob}.
    \item If $ P$ is a spherical triangle part, i.e.\ $ P $ is a 3-marked sphere with $ \chi(P) > 0 $ and markings $ (2,\alpha,\beta)$, then there is a spherical
          triangle with angles $ \pi$,$2\pi/\alpha$, and $2\pi/\beta$ and $G_P$ is the orientation-preserving half of the group generated by the reflections along its sides.
    \item If $ P$ is a torus part, i.e.\ $P$ consists of a sphere with two marked points joined by an $\alpha$-connector, then $G_P$ is the elementary
          group generated by an elliptic of holonomy angle $ 2\pi/\alpha $ (or parabolic, if $ \alpha = \infty$) and a loxodromic element with the same fixed-point
          set (or a parabolic with the same fixed point, if $ \alpha = \infty $).
    \item If $ P$ is a Euclidean triangle part, i.e.\ $ P $ is a 3-marked sphere with $ \chi(P) = 0 $ and markings $ (\alpha,\beta,\gamma)$, then there is a Euclidean
          triangle with angles $ 2\pi/\alpha $, $2\pi/\beta$, and $2\pi/\gamma$ and $G_P$ is the orientation-preserving half of the group generated by the reflections along its sides.
    \item If $ P$ is a pillow part, i.e.\ $ P $ is a sphere with four $2$-marked points, then $G_P $ is the 2222-orbifold group as constructed in Maskit~\cite[\S V.D.9]{maskit}.
  \end{enumerate}
  Each of these groups comes with a canonical fundamental domain cut out by finitely many circles, and every connector incident to a part is associated with one or two parabolic
  or elliptic fixed points that form vertices of this domain.

  The proof now mirrors that of \zcref{lem:maskit_decomposition}.
  Choose a spanning tree for each connected component of the complex $K$ (i.e.\ a maximal subtree of the graph with a vertex for each part and an edges for each connector).
  List the parts in each component in a breadth-first manner, say $ P_1, \ldots, P_n $. Inductively apply \zcref{cor:cone_amalgamation}: let $ \Gamma_1 = G_{P_1} $,
  then for each $ i > 1 $ let $ \Gamma_i = \Gamma_{i-1} *_{\langle h \rangle} G_{P_i} $ where $ \langle h \rangle $ is the cyclic subgroup in corresponding to the connector
  in the tree which is used to reach $ G_{P_i} $ from the part $ G_{P_{j}} $ ($j<i$) which it is connected to. This is possible since $ G_{P_j} $ has a finite-sided
  fundamental domain and so the cusp or cone locus corresponding to the connector in $ G_{P_j} $ has a spherical neighbourhood that the group $ G_{P_i} $ can be glued into.

  (Example: if $ G_{P_i} $ is a spherical triangle group, then let $D$ be its fundamental quadrilateral and let $ H $ be a small hyperbolic disc around the order $2$ fixed point of $ G_{P_i} $
  which is preserved. Then the region $ D \setminus H $, which has a semi-circular boundary, is glued into the half-horoball neighbourhood of the order $2$ fixed point of $ G_{P_j} $ in its fundamental domain.)

  For each connected component $ c $ of $ K $, we have produced a group $ \Gamma_c $.
  Let $ S $ be the Schottky group of rank $ t $ obtained by drawing $2t$ mutually disjoint round circles with common exterior and pairing them arbitrarily with $t$ hyperbolic transformations.
  Conjugating all the groups $\Gamma_c$ by M\"obius transformations if necessary, we may assume that the fundamental domains on the Riemann sphere of $S$ and each group $ \Gamma_c $ lie
  in the interior of an unbounded piece of fundamental domain of every group. By \zcref{cor:cone_amalgamation}, the resulting pattern forms a fundamental domain for the group
  \begin{displaymath}
    \hat\Gamma = \bigast_{c} \Gamma_c * S.
  \end{displaymath}

  Consider now the set $\mc{H}$ of connectors in the complex $K$ that are not realised by one of the amalgamated products already constructed. For each of these, we may find again
  a disc neighbourhood of the corresponding fixed point on the boundary of the corresponding fundamental domain which is sufficiently small to only meet the two incident edges of
  the domain, and we can find a M\"obius transformation which identifies the two discs corresponding to the two ends of each connector in $ \mc{H} $; again we need to deal
  with the problem that some $\infty$-connectors may correspond to multiple vertices in each $\Gamma_c$, but this is fixed as in Step~6 of \zcref{lem:maskit_decomposition}
  by choosing the disc around one vertex arbitrarily and then choosing the others to be translates of this disc by appropriate side-pairing elements. Let $ f_1,\ldots,f_r $
  be the various elements of $ \PSL(2,\C) $ realising these disc identifications, one for each element of $ \mc{H} $. Then by \zcref{cor:cone_hnn} the group
  \begin{displaymath}
    \Gamma = \hat\Gamma *_{f_1} \cdots *_{f_r}
  \end{displaymath}
  is a hyperbolic cone manifold holonomy group that realises the signature $(K,t) $.
\end{proof}

\begin{rem}
\begin{enumerate}
  \item This theorem is not exhaustive, in particular one can actually realise other elementary groups (e.g.\ spherical triangle groups with one reflex angle), but
        the definition of `admissible signature' becomes more convoluted.
  \item It is not clear if all Koebe--Maskit compression body cone manifolds with the same signature are quasiconformally conjugate: the classical theory depends
        on the domain of discontinuity of the groups being conformally planar, which will no longer be the case here (the isometric development of the cone
        manifolds onto a ball $ \B^3 $ with singular hyperbolic metric does not extend conformally to the boundary).
\end{enumerate}
\end{rem}

\section{An explicit deformation}\label{sec:explicit_deformation}
In this section, we give a fully explicit deformation of the form studied in the previous sections,
in order to prove \zcref{thm:weakthm}. We recall from the discussion in \zcref{sec:intro_lp} that
we will deal with two kinds of compression bodies in this section. A \df{(1;2)-compression body}
is a topological $3$-manifold obtained by adding a $1$-handle to one boundary component of a
thickened torus (equivalently, adding a $2$-handle to a thickened genus $2$ surface), and so it has one boundary component of genus two and one boundary component of
genus $1$. The other kind of compression body we will deal with, the $\mc{M}(2)$ manifold, is topologically just a thickened
genus $2$ surface.

\subsection{Construction of explicit parameter spaces}
In this section we construct a parameter space for a subvariety $ \mc{G} $ of the character variety of the fundamental group of the compact genus two surface $ S_{2,0} $ (\zcref{sec:gpgenus2}). We
then construct explicit subspaces of $ \mc{G} $ which parameterise the character varieties of holonomy groups of $(1;2)$-compression bodies with a single rank two cusp and no other cusps (\zcref{sec:gplp}),
and compression body groups with one end a compact genus two surface $S$ and the other end a pair of thrice-punctured spheres arranged so that $S$ has three accidental parabolics (\zcref{sec:gpmax}).

\begin{rem}
  The term variety is justified in all settings we deal with: the $ \SL(2,\C)$ character variety of the thickened genus $2$ surface is reduced and irreducible~\cite[\S 7]{przytycki00},
  and we deal only with $ \SL(2,\C) $ parameterisations of our deformation spaces throughout the paper.
\end{rem}

\subsubsection{Genus two surface groups}\label{sec:gpgenus2}
\begin{figure}
  \centering
  \includegraphics[width=0.45\textwidth]{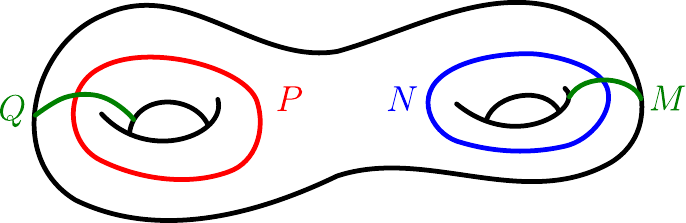}
  \caption{Standard marking on the genus $2$ surface.\label{fig:genus2marking}}
\end{figure}
Take the standard generating set $ \langle P,Q,M,N \rangle $ for $ \pi_1(S_{2,0}) $ shown in \zcref{fig:genus2marking}.
In particular, $ [P,Q][M,N] = I $. We consider the space of representations $ \rho : \pi_1(S_{2,0}) \to \PSL(2,\C) $ such that $ \tr \rho(P) = \tr \rho(P^{-1} N)= 2 $,
normalised so that $ \Fix(P) = \infty $ and
\begin{displaymath}
  M = \begin{bmatrix} \lambda & \lambda^2 - 1 \\ 1 & \lambda \end{bmatrix}\quad\text{where}\quad \lambda \in \C.
\end{displaymath}

\begin{notation}
Since we will be writing many matrices down, we will occasionally write matrices inline: as an example, $ M = (\lambda, \lambda^2 - 1 \mid 1, \lambda) $.
\end{notation}

We can quickly compute the dimension of the representation space: $4$ generators with $3$ degrees of freedom, minus $2$ (the two trace conditions), minus $3$ (surface group relation) and
minus $3$ (normalisation) gives complex dimension $4$.

We will construct an explicit realisation of the space. Setting
\begin{gather*}
  P = \begin{bmatrix} -1 & \alpha \\ 0 & -1 \end{bmatrix}\!,\quad
  Q = \begin{bmatrix} \rho & \beta \\ \sigma & (1+\beta \sigma)/\rho \end{bmatrix}\!,\\
  M = \begin{bmatrix} \lambda & \lambda^2-1 \\ 1 & \lambda \end{bmatrix}\!,\quad\text{and}\quad
  N = \begin{bmatrix} r & b \\ s & (1+b s)/r \end{bmatrix}
\end{gather*}
we have an ambient eight-dimensional space which contains a subvariety $ \mc{G} $ obtained by pushing forwards the surface group relator. In fact, $ \mc{G} $ is
a holomorphic parameter space of genus two surface group representations, and using a computer algebra system we can eliminate all but the four parameters
$ (\alpha, \beta, \sigma, \lambda) $ to give an exact parameterisation of this subvariety in $ \C^8 $. More precisely, we can use the substitutions
\begin{align*}
  b &= \frac{\alpha}{\xi}\Big(\sqrt{\kappa} (1 - \lambda^2 + \alpha \sigma -  \lambda \sigma - \alpha \lambda^2 \sigma + \lambda^3  \sigma) + \zeta + 2\sigma\lambda(1 - \lambda^2)\\
    &\quad{} + \sigma^2(2+\alpha^2+3\alpha\lambda +(\alpha^2-4)\lambda^2-3 \alpha \lambda^3+2 \lambda^4)+ \alpha\sigma^3(1+\alpha\lambda-\lambda^2)\Big)\\
  s &= \frac{1}{\xi}\Big(\alpha \sqrt{\kappa} (-1 - \lambda \sigma) + \alpha (-2 - 2 \lambda \sigma + 2 \sigma^2 + \alpha \lambda \sigma^2 - 2 \lambda^2 \sigma^2 + \alpha \sigma^3)\Big)\\
  r &= -\frac{1}{2\xi} \Big(\sqrt{\kappa} (-\alpha^2 + 2 \alpha \sigma - 2 \alpha \lambda^2 \sigma + \alpha^2 \sigma^2)\\
    &\quad{}+ 4 \zeta - \alpha^3 \sigma + 2\alpha\sigma^2(\alpha + 2 \lambda + \alpha \lambda^2- 2 \lambda^3) + \alpha^3 \sigma^3\Big)\\
  \rho &= \frac{1}{2}\Big( \alpha\sigma - \sqrt{\kappa} \Big)
\end{align*}
where
\begin{gather*}
  \kappa = 4 - 4  \sigma^2 + \alpha^2  \sigma^2 - 4  \alpha  \lambda  \sigma^2 + 4  \lambda^2  \sigma^2\\
  \xi = 4 + \alpha^2 - 4  \lambda^2 + 4  \alpha  \sigma + 2  \alpha^2  \lambda  \sigma - 4  \alpha  \lambda^2  \sigma + \alpha^2  \sigma^2\\
  \zeta = 2 - 2 \lambda^2 + 2 \alpha  \sigma + \alpha^2  \lambda  \sigma - 2 \alpha  \lambda^2  \sigma.
\end{gather*}

By standard trace formulae results surveyed in~\cite[\S\S3.4--3.5]{maclachlan_reid}, we know that the traces in $ \langle P,Q,M,N \rangle $ should
be representable in terms of polynomials in traces of words of length at most $3$. Since $ \tr P $ is fixed and we know we need $4$ complex parameters,
this suggests that a better (in the sense of being easier for human consumption) set of parameters would be $ \tr Q $, $ \tr M $, $ \tr N $, and $ \tr [P,Q] $
(or some other nice word that includes $ P $). Writing down explicit matrices in terms of these new parameters is a nontrivial task.

\subsubsection{$(1;2)$-compression body groups}\label{sec:gplp}
We will use the following normalisation for $(1;2)$-compression body groups in $ X(\pi_1(S_{2,0})) $, which has been chosen to make the isometric circles of $ M $ symmetric about $0$:
\begin{displaymath}
  P = \begin{bmatrix} 1 & \alpha \\ 0 & 1 \end{bmatrix}\!,\quad Q = \begin{bmatrix} 1 & \beta \\ 0 & 1 \end{bmatrix}\!,\quad\text{and}\quad M = \begin{bmatrix} \lambda & \lambda^2 - 1 \\ 1 & \lambda \end{bmatrix}\!.
\end{displaymath}
We write $ G(\alpha,\beta,\lambda) $ for the marked subgroup of $ \PSL(2,\C) $ given by this normalisation. An example of a discrete group in this family
is shown in \zcref{fig:limit_set}. The set $ \mc{CB} = \{ (\alpha,\beta,\sigma,\lambda) \in \mc{G} : \sigma = 0 \} $ parameterises the $(1;2)$-compression body group representations
in $ \mc{G} $.

\begin{figure}
  \centering
  \includegraphics[width=0.5\textwidth]{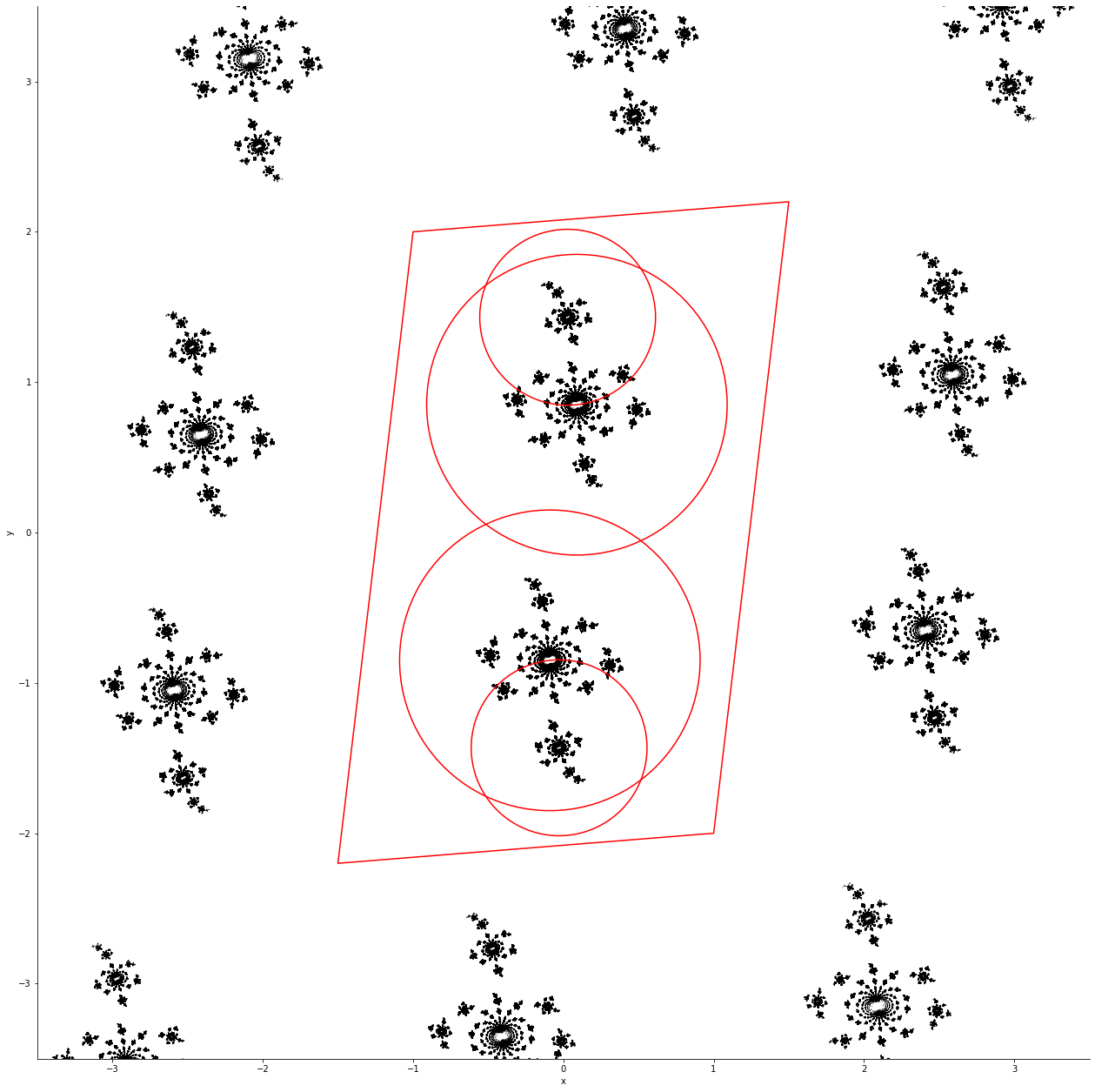}
  \caption{The limit set (in black) of the $(1;2)$-compression body group with parameters $ \alpha = 2.5+0.2i $, $ \beta = 0.5 + 4.2i $, and $ \lambda = 0.09 + 0.85i $. We also show (in red)
  the boundary on the Riemann sphere of a fundamental domain (Ford domain) for the group from which the hyperbolic quotient can be determined by an application of the Poincar\'e polyhedron
  theorem~\cite[\S IV.H]{maskit}.\label{fig:limit_set}}
\end{figure}

\subsubsection{Groups with a maximally cusped end}\label{sec:gpmax}
Finally, we will construct an explicit example of a holonomy group of an $ \mc{M}(2)$-manifold (see \zcref{defn:m2mfd} above) and then embed this example into a deformation space.

\begin{lem}
  The group
  \begin{align*}
    B = \Bigg\langle X = \begin{bmatrix} 1 & 2 \\ 0 & 1 \end{bmatrix}\!,\quad
                     Y_1 &= \begin{bmatrix} 1+2i & 2 \\ 2 & 1-2i \end{bmatrix}\!,\quad
                     Y_2 = \begin{bmatrix} 1-2i & 2 \\ 2 & 1+2i \end{bmatrix}\!,\\
                     J &= \begin{bmatrix} 3 & -2i \\ 4i & 3 \end{bmatrix}\!,\quad
                     J' = \begin{bmatrix} 3+4i & -6i \\ 4i & 3-4i \end{bmatrix} \Bigg\rangle
  \end{align*}
  is the holonomy group of an $ \mc{M}(2)$-manifold.
\end{lem}
\begin{proof}
  The signature of the desired function group is
  \begin{displaymath}
    \Bigg(\begin{tikzpicture}[baseline=-.12cm]
              \draw[fill=black!10] (-.75,0) ellipse (.33cm and .75cm);
              \draw[fill=black!10] (.75,0) ellipse (.33cm and .75cm);
              \draw (-.75,.4) --node[above]{$\infty$} (.75,.4);
              \draw (-.75,0) --node[above]{$\infty$} (.75,0);
              \draw (-.75,-.4) --node[above]{$\infty$} (.75,-.4);
            \end{tikzpicture}\,,\; 0 \Bigg).
  \end{displaymath}
  Following the construction in \zcref{lem:maskit_decomposition}, we begin with two thrice-punctured sphere groups and perform one amalgamated
  product and two HNN extensions.

  Let $ X = (1, 2 \mid 0, 1) $ and $ Y = (1,0\mid2,1) $, so $ \langle X,Y \rangle $ is the
  standard $ [0,0,0] $-triangle group. We take two copies $ G_1 $ and $ G_2 $ of this group, viewed as Fuchsian groups where $ G_1 $ acts
  on $ \H^2 $ and $ G_2 $ acts on $ \bar{\H}^2 $. These groups have cusp neighbourhoods $ B_1 = \{ \Im z > 1 \} $ and $ B_2 = \{ \Im z < -1 \} $ respectively.
  Define $ \Phi_1 = (1,i\mid0,1) $ and $ \Phi_2=(1,-i\mid0,1) $, and set $ Y_1 = Y^{\Phi_1} = \Phi_1 Y \Phi_1^{-1} $ and $ Y_2 = Y^{\Phi_2} $. The
  conjugate groups $ \tilde{G}_1 = G_1^{\Phi_1} = \langle X, Y_1 \rangle $ and $\tilde{G}_2 = G_2^{\Phi_2} = \langle X, Y_2 \rangle$ both have
  cusps at $ \infty $, with cusp neighbourhoods $ \H^2 $ (for $ \tilde{G}_1 $) and $ \bar{\H}^2 $ (for $ \tilde{G}_2 $), and common cusp neighbourhood
  stabiliser $ \langle X \rangle $. Hence $ G' = \tilde{G}_1 *_{\langle X \rangle} \tilde{G}_2 = \langle X, Y_1, Y_2 \rangle $ has quotient surface
  consisting of three components, two thrice-punctured spheres and one four-punctured sphere, such that the four-punctured sphere has an accidental parabolic represented by $ X $.

  The next step is to hide the extra cusps of the four-punctured sphere by HNN extensions. First consider the two cusps $ \Fix(Y_1) = i $ and $ \Fix(Y_2) = -i $; these cusps
  have respective horosphere neighbourhoods on the four-punctured sphere component $ B_1 = \B_{1/4}(3i/4) $ and $ B_2 = \B_{1/4}(-3i/4) $, and we wish to find a transformation
  $ J = (a,b\mid c,d) $ which maps the exterior of $ B_1 $ onto the interior of $ B_2 $ and which sends $ i \mapsto -i $. Since $ \partial B_1 $ and $ \partial B_2 $ are equal radius circles they must
  be the isometric circles of $ J $ and so we obtain the equations $ a/c = d/c = -3i/4 $ and $ ai+b = -i(ci+d) $; direct computation produces that $ J = (3, -2i \mid 4i, 3) $,
  and so we obtain the HNN extension $ G'' = G' *_{\langle J \rangle} $ as a group which has two out of the three desired accidental parabolics. To obtain the final extension, it
  is enough to take $ J'(z) = J(z-1)+1 $ so $ J' = (3+4i, -6i\mid4i, 3-4i) $ and then set $ B = G'' *_{\langle J'\rangle} $.
\end{proof}

The group $ B $ has the desired quotient space structure, and from the construction we also have a natural fundamental domain, \zcref{fig:function_gp}.

\begin{figure}
  \centering
  \begin{subfigure}{0.48\textwidth}
    \centering
    \includegraphics[width=\textwidth]{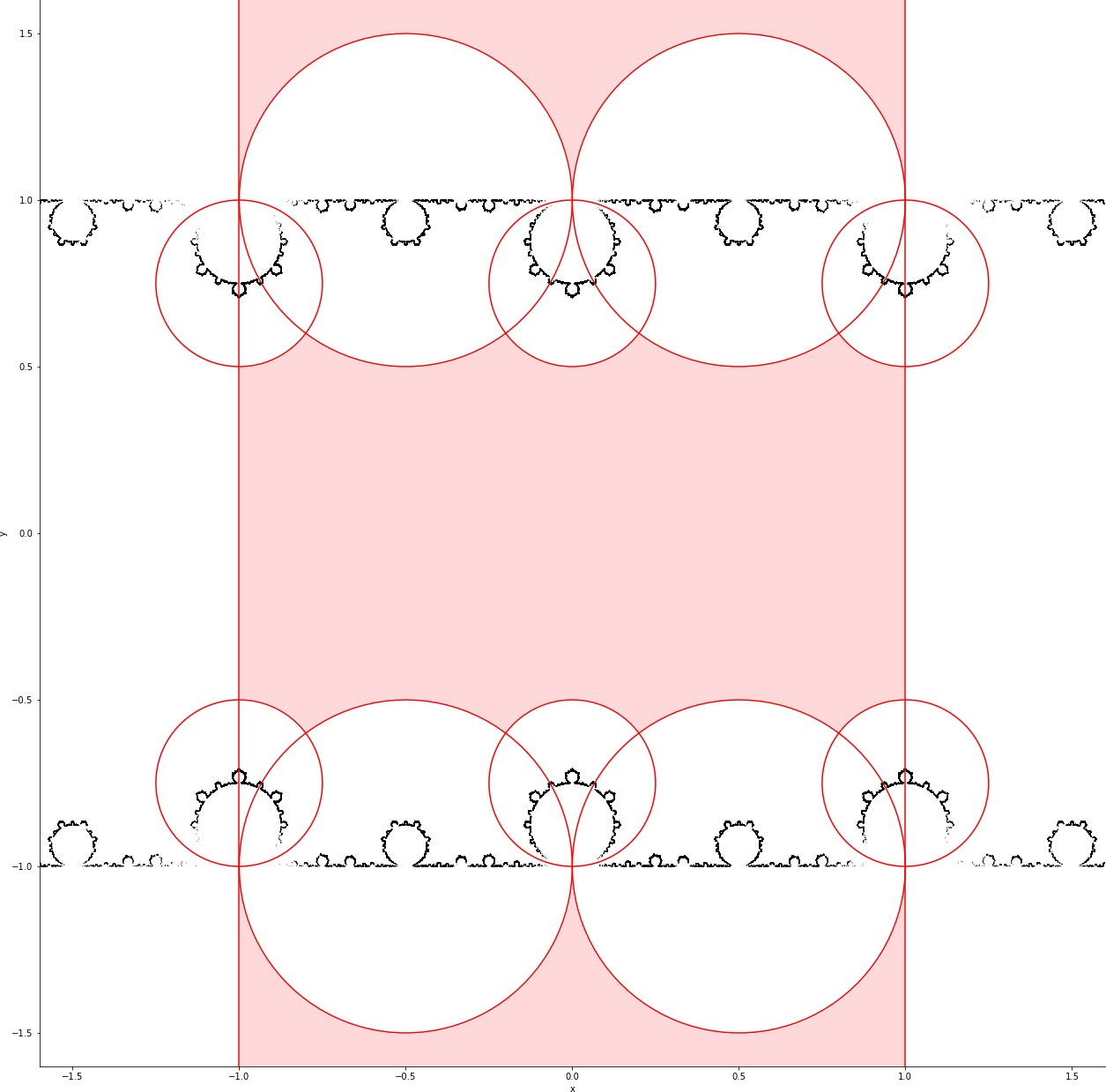}
    \caption{Fundamental domain and limit set.\label{fig:function_gp}}
  \end{subfigure}\hfill
  \begin{subfigure}{0.48\textwidth}
    \centering
    \includegraphics[width=\textwidth]{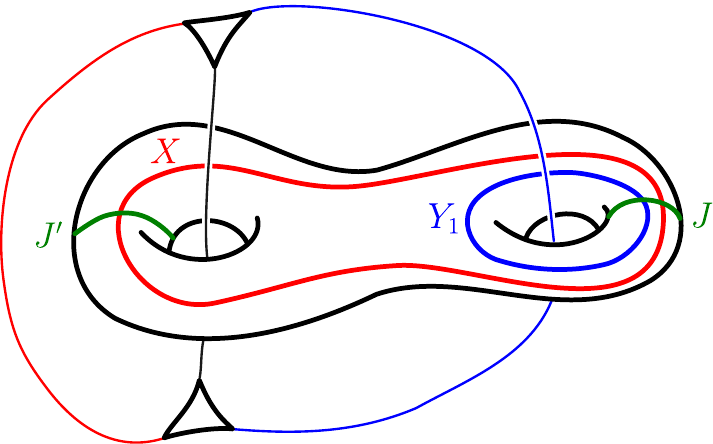}
    \caption{Quotient $3$-manifold.\label{fig:fgroup_marking}}
  \end{subfigure}
  \caption{Geometric objects associated to the Koebe group $B$ constructed in \zcref{sec:gpmax}. The curves
  labelled in (\textsc{B}) are the loops represented by the corresponding elements, not the projection of the paired sides of the fundamental polygon.}
\end{figure}

\begin{lem}\label{lem:conjugated_B}
  The group $ B $ is conjugated to a representation in $ \mc{G} $ by
  \begin{equation}\label{eq:conjugator}
    \Phi = \frac{1}{\sqrt{17}} \begin{bmatrix} 4\sqrt{-1+4i} & \sqrt{8+2i} \\
                          -\sqrt{-2-i} & -\sqrt{1-4i} \end{bmatrix}.
  \end{equation}
  The parameters of the conjugate $\Phi B \Phi^{-1} $ are
  \begin{displaymath}
    \begin{array}{lll}
    \alpha = 16-4i & \rho = (-5+20i)/17 & \beta = (552-32i)/17\\
                    & \sigma = (-1+4i)/17\\
    \lambda = 3 & r = (31+12i)/17 & b = (32+76i)/17\\
                & s = (-4-i)/17.
    \end{array}
  \end{displaymath}
\end{lem}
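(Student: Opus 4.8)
The plan is to produce an explicit dictionary between the standard genus-two marking $P,Q,M,N$ and words in the Koebe generators $X,Y_1,Y_2,J,J'$ of $B$, conjugate those words by $\Phi$, and check that the results are exactly the normalised matrices of \cref{sec:gpgenus2} with the tabulated parameters. The dictionary is forced by the topology: the previous lemma produces, along with $B$, the homeomorphism $\H^3/B\cong S_2\times(-1,1)$ drawn in \cref{fig:fgroup_marking}, and reading the labelled loops off that picture expresses $P,Q,M,N$ as words $\widehat P,\widehat Q,\widehat M,\widehat N$ in $B$ (here it is convenient to use the relation $Y_2=JY_1J^{-1}$, which follows from the HNN structure and is confirmed by a single matrix product, so that one works with the four generators $X,Y_1,J,J'$). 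The trace data decide the correspondence: $B$ has exactly three primitive parabolic conjugacy classes, represented by $X$, by $Y_1\sim Y_2$, and by the word $W$ for the fourth cusp of the intermediate four-punctured sphere, while $J$ and $J'$ are the two loxodromic handle generators with trace $6$; since $\tr\rho(P)=\tr\rho(P^{-1}N)=2$ and $\tr\rho(M)=2\lambda=6$ for the claimed value $\lambda=3$, the only consistent identification (up to conjugacy and inversion in $B$) is $\widehat P\sim X$, $\widehat N\sim Y_1$, $\widehat M\sim J$, $\widehat Q\sim J'$, with $\widehat P^{-1}\widehat N\sim W$ the third accidental parabolic.

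The first step is to fix these words precisely — including the coordinated conjugations needed so that the relator holds on the nose — and to verify, using only the matrices over $\Z[i]$ from the previous lemma, that $[\widehat P,\widehat Q][\widehat M,\widehat N]=I$ in $\PSL(2,\C)$ and that $\widehat P$, $\widehat N$, $\widehat P^{-1}\widehat N$ are parabolic. Because $\pi_1(S_2)$ is one-relator and the Koebe construction already certifies $B$ to be a faithful discrete image of $\pi_1(S_2)$ of the stated homeomorphism type, this is purely a check that the loops have been based, oriented, and multiplied in the order that yields the genus-two relator rather than some conjugate variant of it.

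The second step is to conjugate by $\Phi$ and normalise. The matrix $\Phi$ of \eqref{eq:conjugator} is precisely the Möbius map that simultaneously carries $\Fix(\widehat P)$ to $\infty$ and $\widehat M$ into the canonical shape $(\lambda,\lambda^2-1\mid 1,\lambda)$; imposing these two conditions amounts to solving a pair of quadratics, which accounts for the radicals in \eqref{eq:conjugator} and determines $\Phi$ up to the sign ambiguities already present in those radicals. Granting this, $\Phi\widehat P\Phi^{-1}$ is upper triangular, and its $\SL(2,\C)$ lift with diagonal $-1$ is the one used in \cref{sec:gpgenus2} (giving $\alpha$), while $\Phi\widehat Q\Phi^{-1}$ and $\Phi\widehat N\Phi^{-1}$ have determinant one and hence the shapes $(\rho,\beta\mid\sigma,(1+\beta\sigma)/\rho)$ and $(r,b\mid s,(1+bs)/r)$; reading off the entries yields the tabulated $\alpha,\rho,\beta,\sigma,\lambda,r,b,s$, and a final check that this quadruple satisfies the surface relator confirms that it lies in $\mc G$. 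All of these are finite computations with $2\times2$ matrices, best delegated to a computer algebra system as elsewhere in the paper.

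The main obstacle is the first step: \cref{fig:fgroup_marking} pins down the marking, but getting the base point, the orientations of the handle loops, and the order of multiplication exactly right is delicate, and a wrong choice there propagates to a different-looking conjugator and an inconsistent parameter list. The relator identity $[\widehat P,\widehat Q][\widehat M,\widehat N]=I$ together with the agreement of the conjugated matrices with the normalised forms of \cref{sec:gpgenus2} are exactly the verifications that the bookkeeping is correct; once words passing these checks are in hand, the lemma follows.
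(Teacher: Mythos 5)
Your overall strategy --- read a dictionary between $P,Q,M,N$ and words in $X,Y_1,Y_2,J,J'$ off \cref{fig:fgroup_marking}, conjugate by $\Phi$, and read off the parameters --- is exactly the paper's, and several of your supporting observations are correct (in particular $Y_2=JY_1J^{-1}$ does hold, and $\tr J=\tr J'=6$ forces $\lambda=3$). But the dictionary you commit to is wrong. The paper's words are $P=Y_1X^{-1}$, $Q=J'$, $M=J$, $N=Y_1$, so that $P^{-1}N=XY_1^{-1}\cdot Y_1=X$; you instead put $\widehat P\sim X$ and relegate the third parabolic class to $\widehat P^{-1}\widehat N$, i.e.\ you have swapped the roles of $X$ and the class of $Y_1X^{-1}$. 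Your claim that the trace data make this ``the only consistent identification'' is false: all three primitive parabolic classes of $B$ have trace $\pm 2$, so the conditions $\tr P=\tr N=\tr P^{-1}N=2$ are blind to which class plays which role, and the ambiguity you wave at is precisely the one you resolve incorrectly.

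This is not a cosmetic issue, because the lemma asserts a \emph{specific} $\Phi$ and \emph{specific} parameter values. The conjugator is pinned down (as you yourself say) by sending $\Fix(\widehat P)$ to $\infty$ and putting $\widehat M=J$ into the shape $(\lambda,\lambda^2-1\mid 1,\lambda)$. With your choice $\widehat P=X$ one has $\Fix(\widehat P)=\infty$ already, so $\Phi$ would have to fix $\infty$ and hence be upper triangular --- but the matrix in \eqref{eq:conjugator} has nonzero lower-left entry, and your $\alpha$ would come out as the translation length of a conjugate of $X$ rather than $16-4i$. So if you carried out your plan verbatim, the computation would not reproduce the stated $\Phi$ or the stated table, and your built-in sanity checks would send you back to re-derive the dictionary. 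The fix is simply to take $\widehat P=Y_1X^{-1}$ (a finite fixed point, sent to $\infty$ by the stated $\Phi$) with $\widehat P^{-1}\widehat N=X$ as the third accidental parabolic; with that correction the rest of your argument --- conjugate, lift $P$ with $-1$ on the diagonal to get $\alpha$, and read $\rho,\beta,\sigma,r,b,s$ from the normalised forms --- goes through as in the paper.
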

\begin{proof}
  We define $ \Phi $ to send the fixed point of $ Y_1 X^{-1} $ to $ \infty $ and to conjugate $ J $ to $ (\lambda, \lambda^2-1 | 1,\lambda) $ where $ 2\lambda = \tr J $.
  \textit{A priori} this is an overdetermined system of equations, but it has the unique solution \eqref{eq:conjugator}.
  Applying $ \Phi $ to $B$ and comparing \zcref{fig:genus2marking} to \zcref{fig:fgroup_marking}, the normalised generators are
  \begin{gather*}
    P = \Phi Y_1 X^{-1} \Phi^{-1} = \begin{bmatrix} 1 & -16+4i \\ 0 & 1 \end{bmatrix}\!,\quad
    Q = \Phi J' \Phi^{-1} = \frac{1}{17} \begin{bmatrix} -5+20i & 552-32i \\ -1+4i & 107 - 20i \end{bmatrix}\!,\\
    M = \Phi J \Phi^{-1} = \begin{bmatrix} 3 & 8 \\ 1 & 3 \end{bmatrix}\!,\quad\text{and}\quad
    N = \Phi Y_1 \Phi^{-1} = \frac{1}{17} \begin{bmatrix} 31+12i & 32+76i \\ -4-i & 3-12i \end{bmatrix}
  \end{gather*}
  from which we read the parameters directly.
\end{proof}

\begin{prp}\label{prp:parameterised_B}
  Let $ X $ be the subvariety of $ \mc{G}$ of representations $ \rho $ such that $ \tr N = 2 $ and $ \tr P^{-1} N = 2 $ which contains the group constructed in \zcref{lem:conjugated_B}.
  This is a three-dimensional variety parameterised by $ \alpha $, $ \beta $, and $ \sigma $ cut out by the equations
  \begin{gather*}
    \rho = \frac{-8+\alpha\sigma^2(-4+\alpha-\alpha\sigma)}{8+4\alpha\sigma},\quad \lambda = \frac{-8+\alpha\sigma(-4+\alpha-\alpha\sigma)}{8+4\alpha\sigma}\\
    r = \frac{2+4\sigma+\alpha\sigma(2+\sigma)}{2+\alpha\sigma}, \quad b = \frac{\alpha\sigma^2(4+\alpha+\alpha\sigma)^2}{4(2+\alpha\sigma)^2}, \quad\text{and}\quad s = -\frac{4}{\alpha}.
  \end{gather*}
\end{prp}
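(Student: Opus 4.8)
The plan is to intersect the four-parameter description of $ \mc{G} $ recorded in \cref{app:explicit} with the one new trace equation and then simplify. Recall that the normalisation defining $ \mc{G} $ in \cref{sec:gpgenus2} already builds in $ \tr(P^{-1}N) = 2 $, so the only condition genuinely cutting $ X $ out of $ \mc{G} $ is $ \tr N = 2 $. With $ N = (r, b \mid s, (1+bs)/r) $ as in \cref{sec:gpgenus2}, this is equivalent to
\begin{displaymath}
  r\,(\tr N - 2) = r^2 - 2r + 1 + bs = (r-1)^2 + bs = 0 ,
\end{displaymath}
and substituting the appendix expressions for $ r,b,s $ (rational functions of $ \alpha,\beta,\sigma,\lambda $) turns this into a single polynomial relation among the four free parameters of $ \mc{G} $.

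The next step is to solve that relation for $ \lambda $ in a computer algebra system. It can have more than one root, so $ X $ is taken to be the component containing the representation of \cref{lem:conjugated_B}; on that component one reads off $ \lambda = (-8+\alpha\sigma(-4+\alpha-\alpha\sigma))/(8+4\alpha\sigma) $. Feeding this value of $ \lambda $ back into the appendix formulas for $ \rho, r, b, s $ and simplifying produces the five displayed equations — in particular $ \beta $ drops out of every one of them, so $ (\alpha,\beta,\sigma) $ survive as independent coordinates. For the dimension claim: $ \mc{G} $ is the closure of the image of a rational map $ \C^4 \to \C^8 $, hence irreducible of dimension four; the function $ \tr N - 2 $ does not vanish identically on it (a generic representation in $ \mc{G} $ has $ N $ loxodromic), so its zero set has pure dimension three; and the five equations realise the component $ X $ as the graph of a rational map over a Zariski-open subset of $ \C^3 $ with coordinates $ (\alpha,\beta,\sigma) $. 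Thus $ X $ is irreducible and three-dimensional.

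Finally I would check that the group of \cref{lem:conjugated_B} really sits on $ X $, not merely on the full $ \{\tr N = 2\} $ locus: substituting $ \alpha = 16-4i $, $ \beta = (552-32i)/17 $, $ \sigma = (-1+4i)/17 $ into the five equations returns $ \lambda = 3 $, $ \rho = (-5+20i)/17 $, $ r = (31+12i)/17 $, $ b = (32+76i)/17 $, and $ s = -4/\alpha = (-4-i)/17 $, which are exactly the parameters listed in \cref{lem:conjugated_B}; and $ \tr N = 2 $ there is automatic because $ N = \Phi Y_1 \Phi^{-1} $ is conjugate to the parabolic generator $ Y $.

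The main obstacle is the elimination in the middle step: one must push the (somewhat lengthy) appendix formulas through $ (r-1)^2 + bs = 0 $, solve the resulting polynomial for $ \lambda $, identify the branch on which the reference group lies, and verify that back-substitution genuinely collapses the four-variable rational expressions for $ \rho, r, b, s $ into the compact $ \beta $-free forms claimed. This is purely mechanical but demands care over the branch selection; the remaining ingredients — the dimension count and the finite arithmetic check against \cref{lem:conjugated_B} — are routine once the parameterisation is explicit.
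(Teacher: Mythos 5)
Your proposal is correct and takes essentially the same route as the paper, whose entire proof is the sentence ``we carried out a standard computer algebra elimination of variables, and chose the correct component of the solution set by hand''; you have simply made that elimination explicit (imposing $(r-1)^2+bs=0$ on the appendix parameterisation of $\mc{G}$, solving for $\lambda$, and selecting the branch through the group of \cref{lem:conjugated_B}). Your additional dimension count and the arithmetic check against the parameters in \cref{lem:conjugated_B} are consistent with the paper and correct.
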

\begin{proof}
  We carried out a standard computer algebra elimination of variables, and chose the correct component of the solution set by hand.
\end{proof}

Denote by $ B(\alpha,\beta,\sigma) $ the groups $ \langle P,Q,M,N \rangle $ defined by the parameterisation in \zcref{prp:parameterised_B}; they form a subvariety $ \mc{F} \subset \mc{G} $
which parameterises $ \mc{M}(2)$-groups.

\subsection{Deformations through cone manifolds}
We have constructed three parameter spaces:
\begin{displaymath}
  \begin{tikzcd}[sep=small, column sep=tiny]
  \CB \arrow[r] & \mc{G} & \F \arrow[l] & \text{concrete parameter spaces}\\
  \QH(G) \arrow[dr]\arrow[u,hook] && \QH(B) \arrow[dl]\arrow[u,hook] & \text{discrete loci}\\
  &\Teich(S_{2,0}) && \text{conformal structures}
  \end{tikzcd}
\end{displaymath}
In this diagram, $ \QH(G) $ and $ \QH(B) $ denote the quasiconformal deformation spaces of the $(1;2)$-compression body groups and the maximally cusped genus $2$ groups,
respectively. These spaces lift to interiors of components of the discreteness loci inside $ \CB $ and $ \F $, and are embeddings of quotients of the Teichm\"uller space
of the compact genus two surface~\cite[Theorem~5.27]{matsuzakitaniguchi}. While $ \CB $ and $ \F $ are cut out by algebraic equations in $ \mc{G} $, the
quasiconformal deformation spaces are incredibly wild open subsets that we have very little control over~\cite{ohshika20,magid12,bromberg11}. We will construct a family of representations which joins
the two discreteness loci by deforming a single element of the group from a parabolic to the identity through cone manifolds.

We consider the $\mc{F}$-group constructed above, in \zcref{sec:gpmax}. We ask for a continuous family of deformations of the fundamental domain of this group
to the fundamental domain of a $\mc{CB}$-group. We do this by deforming the two $ [0,0,0]$-triangle groups through a series of fundamental domains as
described in the previous section. The point now is that the action of deforming the peripheral circles of one of the thrice-punctured
sphere groups to `bubble' off the isometric circles of one of the stable elements of the HNN extensions is a local deformation in $ \hat{\C} $, and there are no obstructions
for the deformation to hit. With the methods here, we do not control the conformal structures on either end---we simply show that there exists \emph{some} such path.
Of course, there will in reality be \emph{many} such paths.

In order to construct the deformation, we require the following to hold:
\begin{itemize}
  \item The vertical lines paired by $ X $ should be fixed in place since $ X $ will remain a parabolic through the deformation.
  \item The circles paired by $ Y_1 $ and $ Y_2 $ should remain tangent to the vertical lines paired by $ X $.
  \item The circles paired by $ J $ should split off and become the circles paired by the loxodromic generator of the $\mc{CB}$-group.
  \item The circles paired by $ J' $ and $ J'' $ should become horizontal lines, paired by the second parabolic generator of the $\mc{CB}$-group,
        and should always be orthogonal to the vertical lines paired by $ X $ and the circles paired by the $ Y_j $.
\end{itemize}

\begin{figure}
  \centering
  \begin{tikzpicture}[scale=1.5]
    \draw[color=black!50,->] (-1.5,-.2) -- (1.5,-.2) node[label={right:$\R$}] {};
    \draw[color=black!50,->] (0,-.5) -- (0,2) node[label={above:$i\R$}] {};
    \draw[thick] (-1,-0.5)--(-1,2);
    \draw[thick] (1,-0.5)--(1,2);
    \draw (0, .5) node[inner sep=1pt, fill=black, circle, label={below:$i$}] {};
    \draw (-0.45, .6) node {$\theta$};
    \draw (-0.268328, 0.634164) arc [end angle = 180, delta angle=26.565, radius=.3];
    \draw[thick] (-0.378, 1) circle (0.625);
    \draw[thick] (0.378, 1) circle (0.625);
    \draw (0,.5)--(-1,1)--(-1,.5)--(0,.5);
    \draw[dashed] (-1,1)--(0,1.5)--(0,.5);
    \draw[dashed] (-1,1)--(-0.378, 1)--(0,.5);
    \draw[dashed] (-0.378, 1)--(0,1.5);
    \draw (-0.378, 1) node[inner sep=1pt, fill=black, circle] {};
    \draw (-0.463, 1.132) node {$\zeta$};
  \end{tikzpicture}
  \caption{Our deformation is parameterised by $ \theta $, which controls the isometric circles of $ Y_1 $ as shown.\label{fig:two_triangles_notation}}
\end{figure}

In order to compute the deformation, we will fix one of the intersection points of the isometric circles of $ Y_1 $ at $ i $,
send the other intersection point to $ \infty $ along the positive $\Im$-axis while keeping the isometric circles tangent to the vertical
lines $ \Im z = \pm 1 $, and parameterise based on the angle $ \theta $ that joins $ i$ to the point of tangency (\zcref{fig:two_triangles_notation}).
One can show that the centre of the left circle is
\begin{displaymath}
  \zeta = -\frac{\tan(\theta)}{\tan(2\theta)} + i(1+\tan\theta),
\end{displaymath}
the centre of the other circle is $ -\overline{\zeta} $, and the two circles have radius $ \tan(\theta)/\sin(2\theta) $. The angle between the two circles is $4\theta$. Observe that all these
quantities have well-defined limits as $ \theta \to 0 $ (the $\mc{F}$-group) and as $ \theta \to \pi/2 $ (the $\mc{CB}$-group). We can directly compute
that the elliptic transformation pairing them is
\begin{displaymath}
  Y_1 = \begin{bmatrix} -\cos2\theta + i(1+\cos 2\theta + \sin 2\theta) & 1+\cos2\theta+2\sin2\theta \\
                                \sin 2\theta \cot\theta & -\cos 2\theta - i(1+\cos 2\theta + \sin 2\theta) \end{bmatrix}.
\end{displaymath}

In order to ensure the angle-gluing equations are correct at all stages, it is sufficient to require all orthogonal circles to remain orthogonal throughout the deformation, that $J$ and $ J'$ remain
hyperbolic throughout the deformation, and that complex conjugation remains a symmetry of the circle configuration. These pin down all data except for the radii of the isometric circles
of $J$ (resp. $ J'$). The pencil of circles in which the upper isometric circle of $ J $ must lie is the pencil of circles orthogonal to the two isometric circles of $ Y_1 $,
i.e.\ the hyperbolic pencil with loci at $ i $ and $ i (1+2\tan\theta) $. If $ [k:a:b:h] \in \P^3 $ represents the circle $ 0 = k(x^2+y^2) +ax + by +h $, c.f.~\cite[\S 3.2]{beardon} (though
we use a slightly different projective base compared to that reference), then the isometric circle of $J$ is of the form
\begin{align*}
  &\nu \left[1 : 0 : -2 : 1\right] + (1-\nu) \left[1 : 0 : -2(1+2\tan\theta) : (1+2\tan\theta)^2\right]\\
  &\quad{}= \left[1 : 0 : -2 + 4(\nu-1)\tan\theta : 1+4(1-\nu)\tan\theta+4(1-\nu)\tan^2\theta\right]
\end{align*}
where $ \nu \in \R $ is some parameter.

Similarly the upper isometric circle of $ J' $, which passes through the point $ -1 + i(1+\tan\theta) $ and is orthogonal to the line $ \Im z = -1 $,
is of the form
\begin{displaymath}
  \left[1 : 2 : -2(1+\tan\theta+\varrho) : 1-\varrho^2 + (1+\tan\theta-\varrho)^2\right]
\end{displaymath}
parameterised by its (signed depending on whether it lies above or below its tripoint with $ -1+i\R $ and the circle of $ Y_1 $) radius $ \varrho $.

We now choose our parameters $ \nu $ and $ \varrho $, which we can do essentially arbitrarily except we need to ensure that the isometric circles of $ J $ and $ J' $ stay
disjoint throughout the deformation. To make this simple, we choose $ \nu $ so that the isometric circle of $ J$ has constant radius $ 1/4 $; i.e.
\begin{displaymath}
  \frac{1}{4} = (-2 + 4(\nu-1)\tan\theta)^2 - 4(1+4(1-\nu)\tan\theta+4(1-\nu)\tan^2\theta).
\end{displaymath}
Take
\begin{displaymath}
  \varrho = \frac{1}{2}\left( \tan \theta + 1 - \frac{1}{4} (5-3\cos 2\theta) \right),
\end{displaymath}
which is computed by requiring the bottom of the isometric circle of $J$ in the upper halfplane to be at height $ 1/2 + (3/4)(1-\cos(2\theta)) $. It is a routine
calculation to see that the isometric circles do not collide.

To compute explicit matrices for $ J $ and $ J' $, we just substitute these calculations into the hyperbolic transformation with isometric circles centred at $ w $
and $ \overline{w} $ with radius $ r $: this has the matrix
\begin{displaymath}
  \begin{bmatrix}
    iw/r & i(r- w\overline{w}/r) \\
    i/r & -i \overline{w}/r
  \end{bmatrix}.
\end{displaymath}

The resulting deformation is shown in \zcref{fig:deformation_animation}; we have produced a smooth path of cone manifold holonomy groups joining discrete groups
of the desired kinds. Since we started with the group produced by the Koebe construction in \zcref{sec:gpmax} and not a group of the form $ B(\alpha,\beta,\sigma) $ it is not immediately a path
in $ \mc{G} $, but it is a routine calculation in M\"obius geometry to construct a smoothly parameterised family of M\"obius transforms, exactly as in \zcref{lem:conjugated_B}, which transform
each group constructed here into a corresponding point in $ \mc{G} $. The point is that these maps are defined only in terms of fixed points of generators, and these fixed points move smoothly
under the deformation we have constructed.

\begin{figure}
  \begin{subfigure}{0.32\textwidth}
    \centering
    \includegraphics[width=\textwidth]{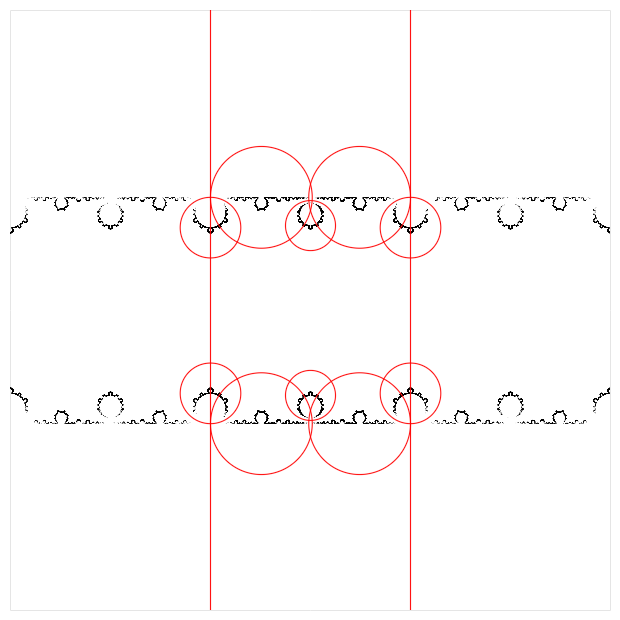}
    \caption*{$\theta = \pi/24$}
  \end{subfigure}%
  \begin{subfigure}{0.32\textwidth}
    \centering
    \includegraphics[width=\textwidth]{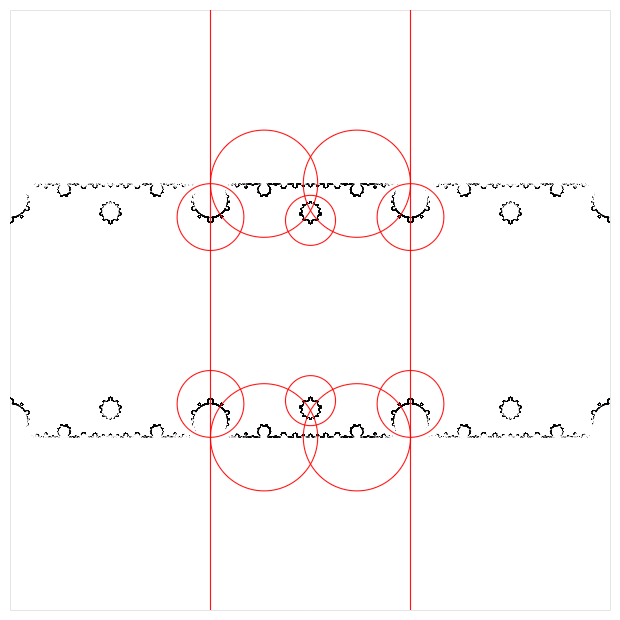}
    \caption*{$\theta = 2\pi/24$}
  \end{subfigure}%
  \begin{subfigure}{0.32\textwidth}
    \centering
    \includegraphics[width=\textwidth]{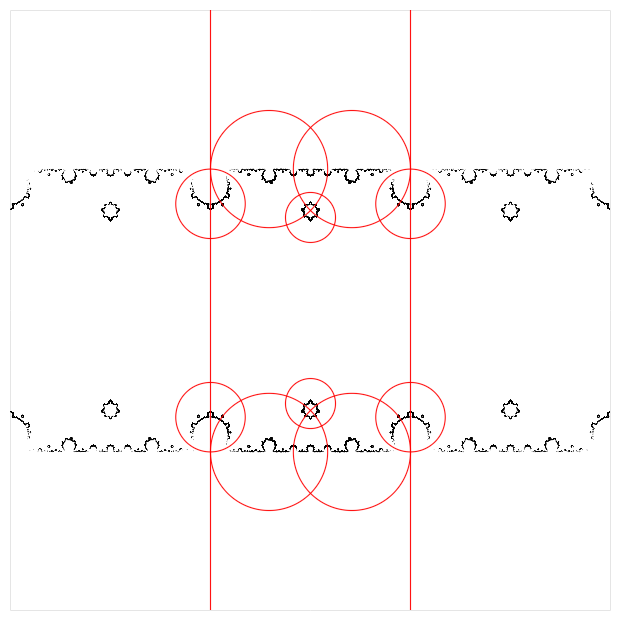}
    \caption*{$\theta = 3\pi/24$}
  \end{subfigure}\\
  \begin{subfigure}{0.32\textwidth}
    \centering
    \includegraphics[width=\textwidth]{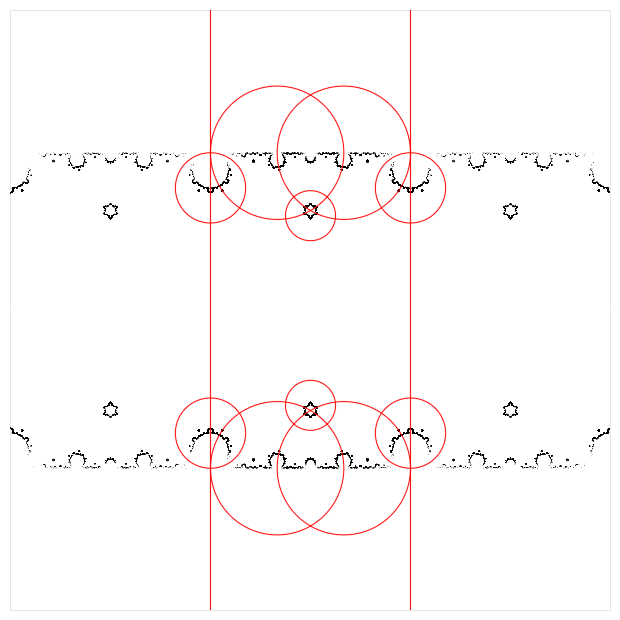}
    \caption*{$\theta = 4\pi/24$}
  \end{subfigure}%
  \begin{subfigure}{0.32\textwidth}
    \centering
    \includegraphics[width=\textwidth]{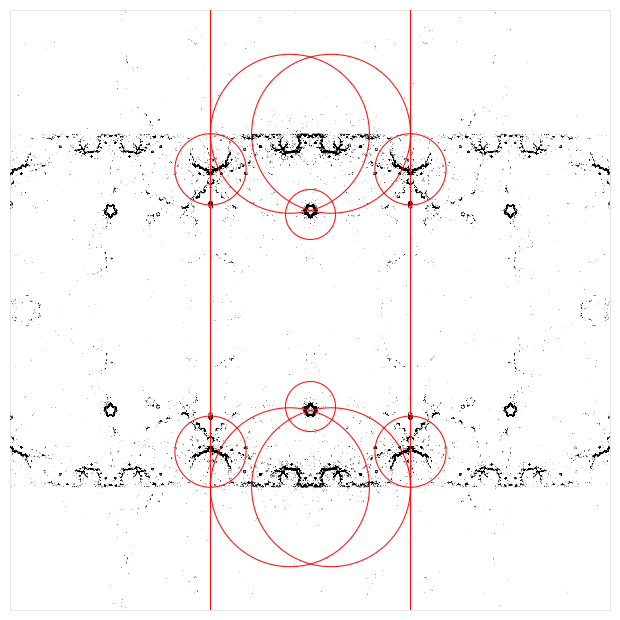}
    \caption*{$\theta = 5\pi/24$}
  \end{subfigure}%
  \begin{subfigure}{0.32\textwidth}
    \centering
    \includegraphics[width=\textwidth]{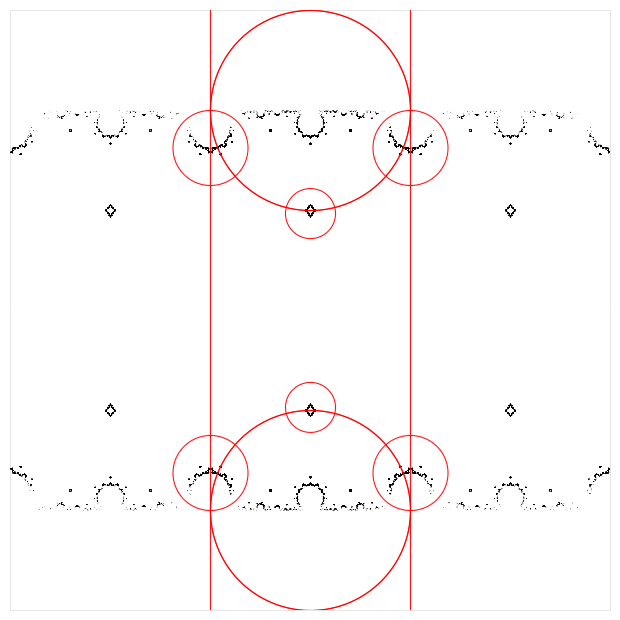}
    \caption*{$\theta = 6\pi/24$}
  \end{subfigure}\\
  \begin{subfigure}{0.32\textwidth}
    \centering
    \includegraphics[width=\textwidth]{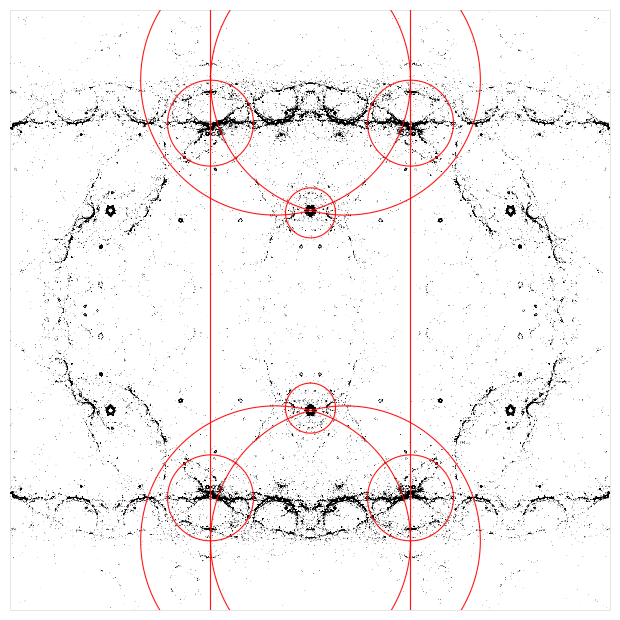}
    \caption*{$\theta = 7\pi/24$}
  \end{subfigure}%
  \begin{subfigure}{0.32\textwidth}
    \centering
    \includegraphics[width=\textwidth]{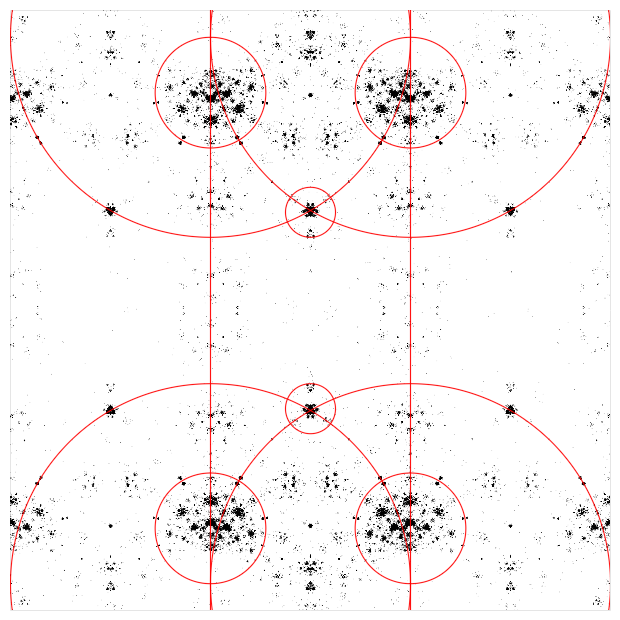}
    \caption*{$\theta = 8\pi/24$}
  \end{subfigure}%
  \begin{subfigure}{0.32\textwidth}
    \centering
    \includegraphics[width=\textwidth]{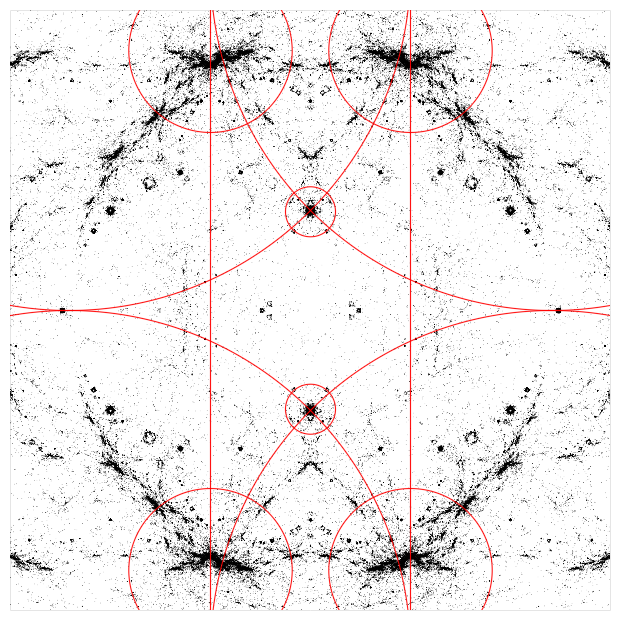}
    \caption*{$\theta = 9\pi/24$}
  \end{subfigure}\\
  \begin{subfigure}{0.32\textwidth}
    \centering
    \includegraphics[width=\textwidth]{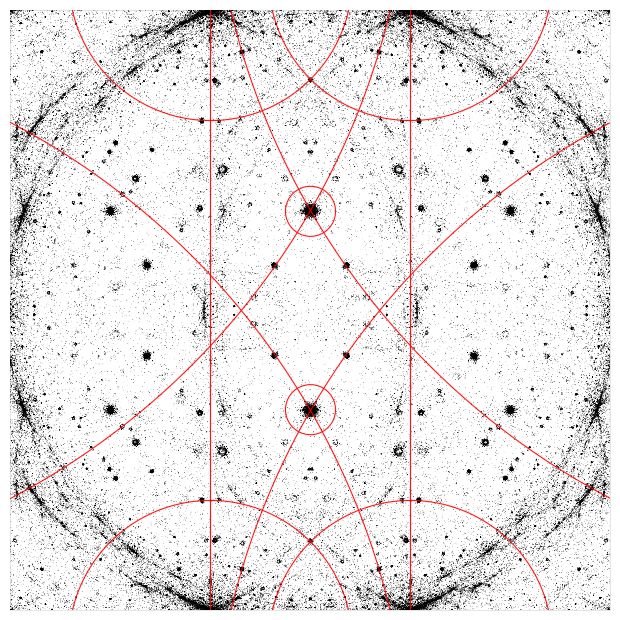}
    \caption*{$\theta = 10\pi/24$}
  \end{subfigure}%
  \begin{subfigure}{0.32\textwidth}
    \centering
    \includegraphics[width=\textwidth]{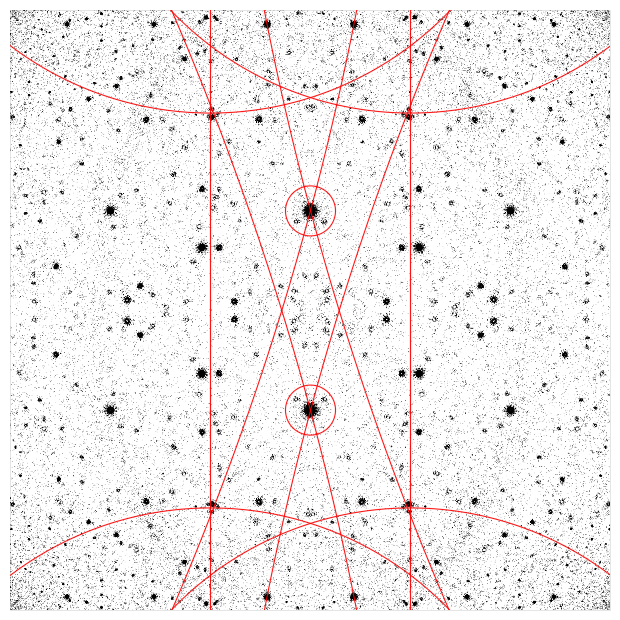}
    \caption*{$\theta = 11\pi/24$}
  \end{subfigure}%
  \begin{subfigure}{0.32\textwidth}
    \centering
    \includegraphics[width=\textwidth]{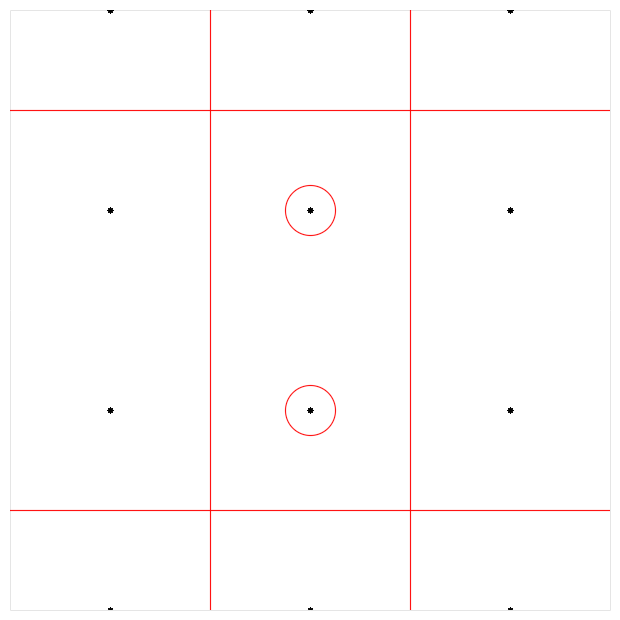}
    \caption*{$\theta = 12\pi/24$}
  \end{subfigure}
  \caption{Explicit deformation from a maximally degenerated group to a $(1;2)$-compression body; the cone angle in the quotient is $4\theta$. All axes are $ [-3,3] $.\label{fig:deformation_animation}}
\end{figure}

The deformation can be continued past the $ (1;2)$-compression body group, as shown in \zcref{fig:deformation_animation2}, although this continuation is of less interest
since it passes through a point where the isometric circles of $ J' $ degenerate to points and the deformation leaves $ \PSL(2,\C)$ momentarily. The endpoint of this
extended deformation is a group obtained from the starting function group by blowing up a puncture on the pinched end to become a hyperbolic and pinching down a handle
on the genus two end to become parabolic; the corresponding manifold is a compression body where one end is a twice-punctured torus and the other end is a four-punctured sphere.

\begin{figure}
  \begin{subfigure}{0.32\textwidth}
    \centering
    \includegraphics[width=\textwidth]{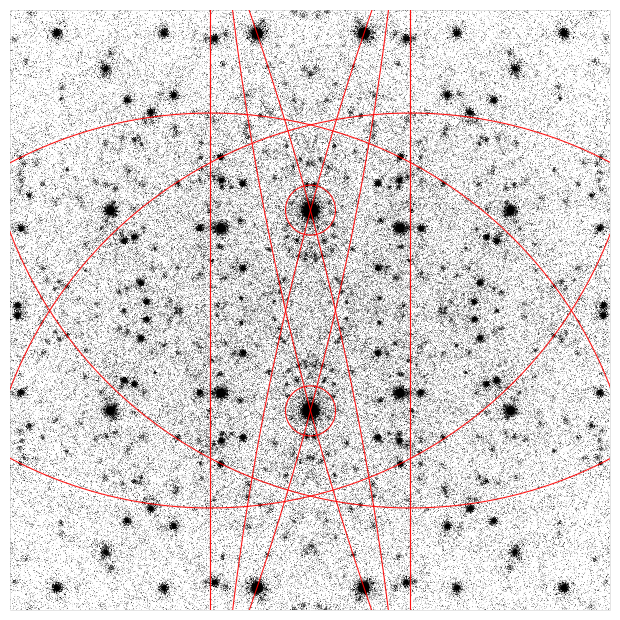}
    \caption*{$\theta = 13\pi/24$}
  \end{subfigure}%
  \begin{subfigure}{0.32\textwidth}
    \centering
    \includegraphics[width=\textwidth]{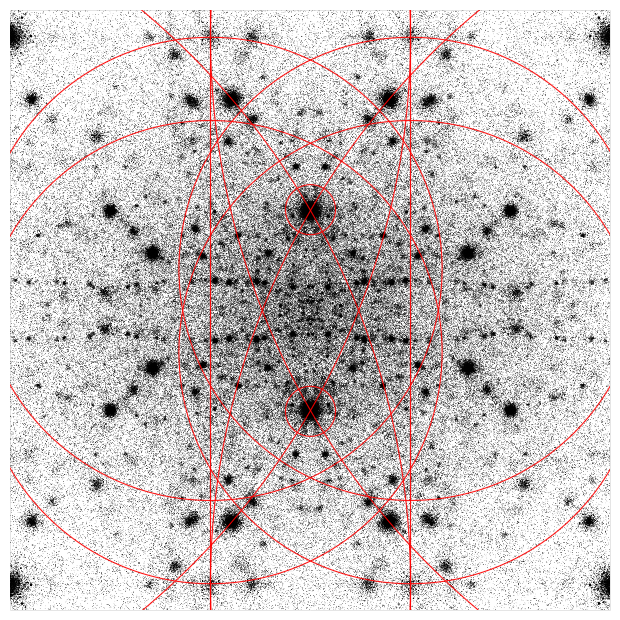}
    \caption*{$\theta = 14\pi/24$}
  \end{subfigure}%
  \begin{subfigure}{0.32\textwidth}
    \centering
    \includegraphics[width=\textwidth]{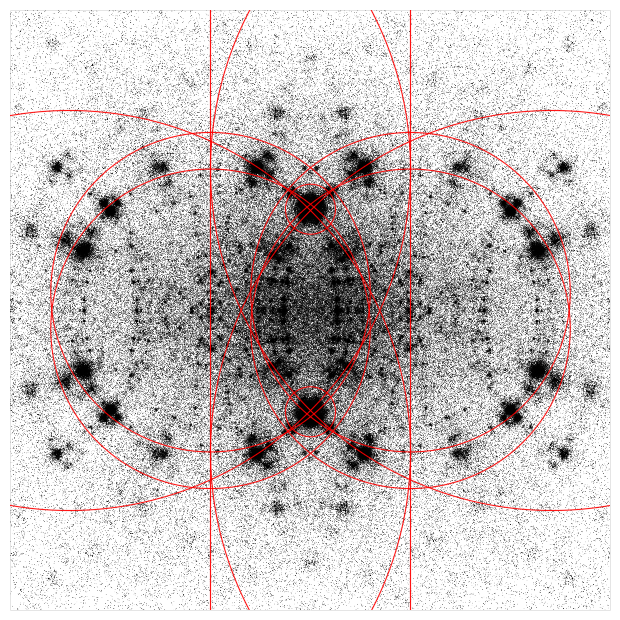}
    \caption*{$\theta = 15\pi/24$}
  \end{subfigure}\\
  \begin{subfigure}{0.32\textwidth}
    \centering
    \includegraphics[width=\textwidth]{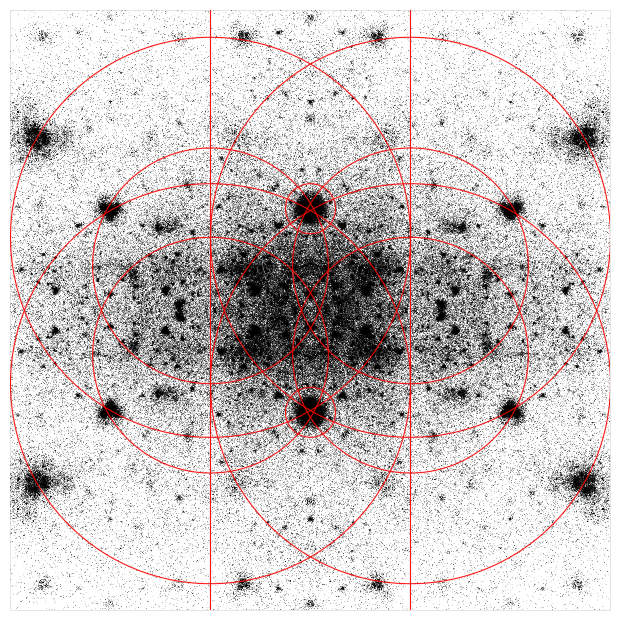}
    \caption*{$\theta = 16\pi/24$}
  \end{subfigure}%
  \begin{subfigure}{0.32\textwidth}
    \centering
    \includegraphics[width=\textwidth]{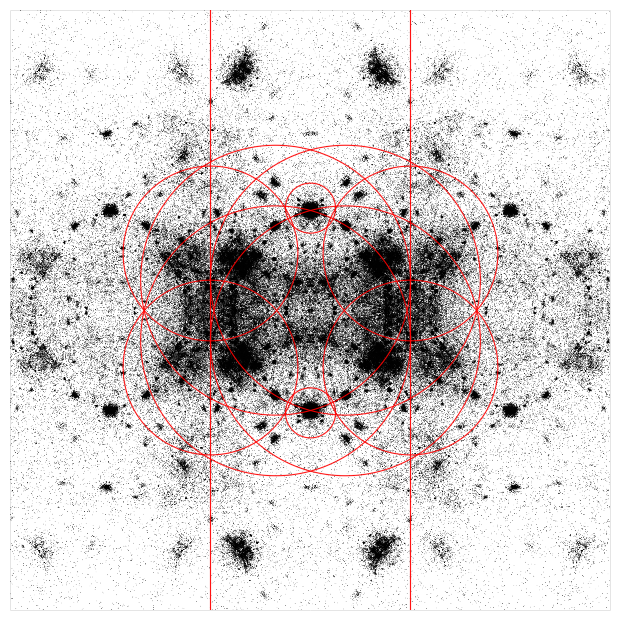}
    \caption*{$\theta = 17\pi/24$}
  \end{subfigure}%
  \begin{subfigure}{0.32\textwidth}
    \centering
    \includegraphics[width=\textwidth]{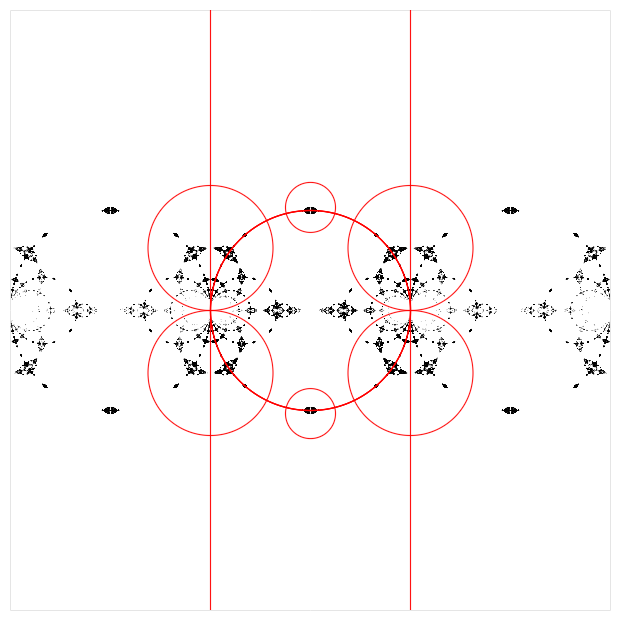}
    \caption*{$\theta = 18\pi/24$}
  \end{subfigure}\\
  \begin{subfigure}{0.32\textwidth}
    \centering
    \includegraphics[width=\textwidth]{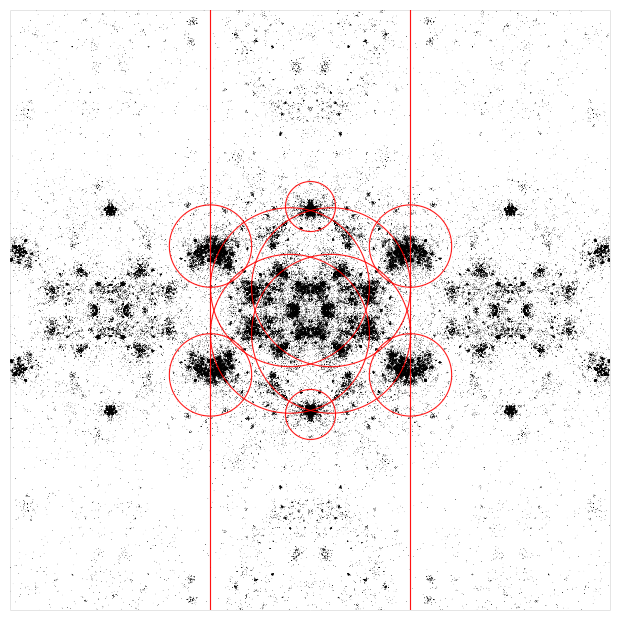}
    \caption*{$\theta = 19\pi/24$}
  \end{subfigure}%
  \begin{subfigure}{0.32\textwidth}
    \centering
    \includegraphics[width=\textwidth]{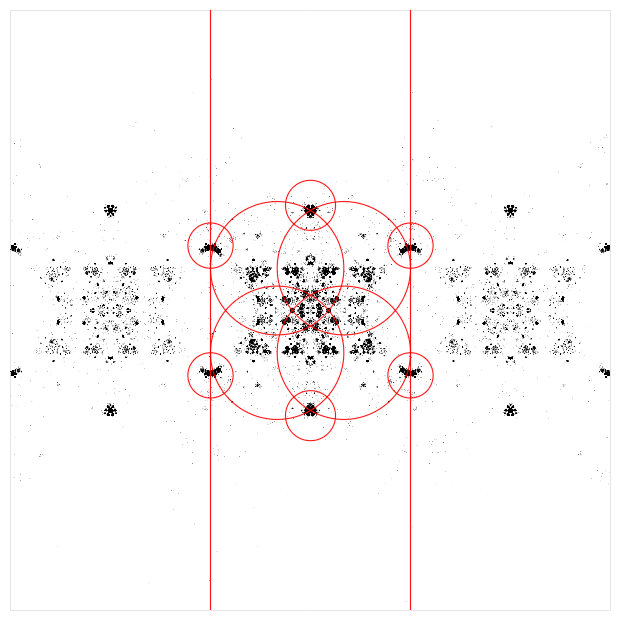}
    \caption*{$\theta = 20\pi/24$}
  \end{subfigure}%
  \begin{subfigure}{0.32\textwidth}
    \centering
    \includegraphics[width=\textwidth]{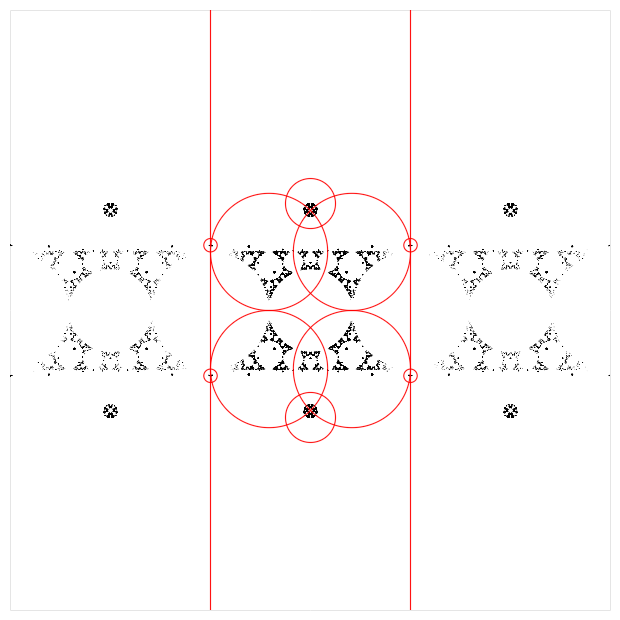}
    \caption*{$\theta = 21\pi/24$}
  \end{subfigure}\\
  \begin{subfigure}{0.32\textwidth}
    \centering
    \includegraphics[width=\textwidth]{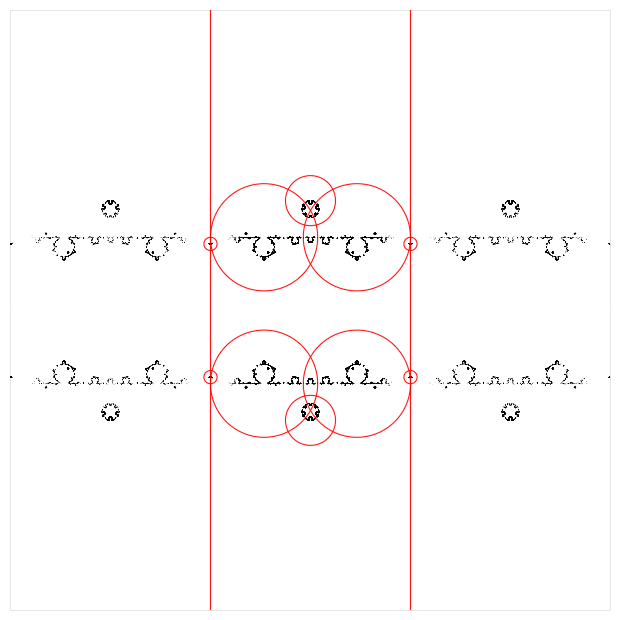}
    \caption*{$\theta = 22\pi/24$}
  \end{subfigure}%
  \begin{subfigure}{0.32\textwidth}
    \centering
    \includegraphics[width=\textwidth]{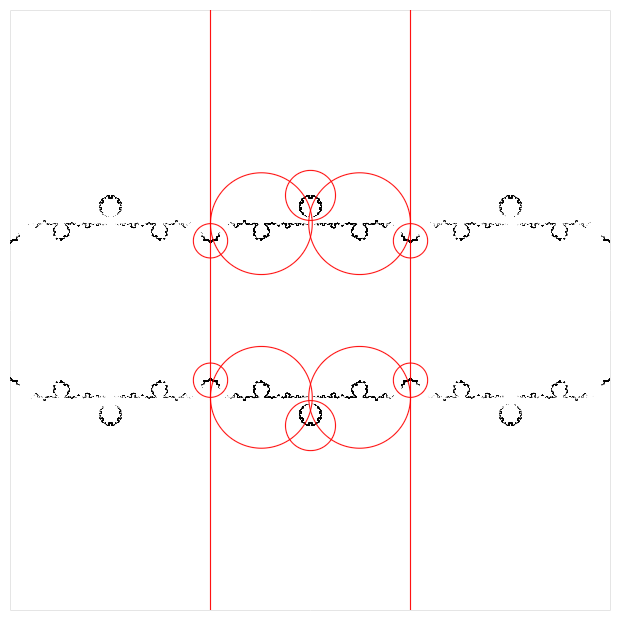}
    \caption*{$\theta = 23\pi/24$}
  \end{subfigure}%
  \begin{subfigure}{0.32\textwidth}
    \centering
    \includegraphics[width=\textwidth]{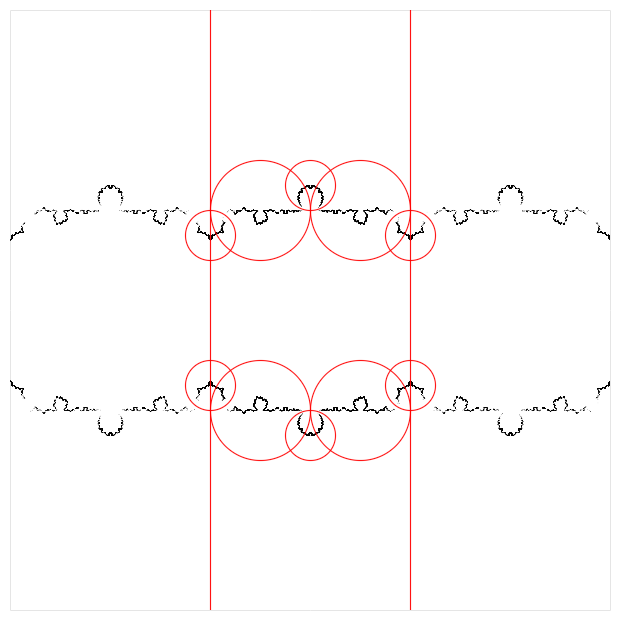}
    \caption*{$\theta = 24\pi/24$}
  \end{subfigure}
  \caption{Explicit deformation beyond the $(1;2)$-compression body; the cone angle in the quotient is $4\theta$. All axes are $ [-3,3] $.\label{fig:deformation_animation2}}
\end{figure}

The full animation may be viewed at
\begin{center} \url{https://aelzenaar.github.io/cones/cbdeformation.html}. \end{center}

\sloppy
\printbibliography

\end{document}